\newcommand{\loc}{\mathrm{loc}}
\newcommand*{\defeq}{\mathrel{\vcenter{\baselineskip0.65ex \lineskiplimit0pt
			\hbox{$\raisebox{-0.200ex}{\scriptsize.}$}\hbox{\scriptsize.}}}
	=} 
\newcommand{\co}{\colon\thinspace} 
\renewcommand{\d}{\partial}
\newcommand{\R}{\mathbb{R}}
\newcommand{\imp}{\ \Rightarrow \ }
\newcommand{\absv}[1]{\left\vert #1 \right\vert}
\newcommand{\norm}[1]{\left\Vert #1 \right\Vert}
\newtheorem{theorem}{Theorem}
\newaliascnt{lemma}{theorem}
\newtheorem{lemma}[lemma]{Lemma}
\newaliascnt{proposition}{theorem}
\newaliascnt{corollary}{theorem}
\newtheorem{corollary}[corollary]{Corollary}
\theoremstyle{definition}
\newaliascnt{definition}{theorem}
\newtheorem{definition}[definition]{Definition}
\newaliascnt{example}{theorem}
\newaliascnt{exercise}{theorem}
\theoremstyle{remark}
\newaliascnt{remark}{theorem}
\numberwithin{equation}{section}
\title{On the invariant Cantor sets of period doubling type of infinitely renormalizable area-preserving maps}
\author{
Dan Strängberg
\thanks{Department of Mathematics, Uppsala University, 
Box 480, 75106 Uppsala (Sweden). {\tt dan.strangberg@math.uu.se}}
}
\begin{document}

\maketitle

\begin{abstract}
	Since its inception in the 1970's at the hands of Feigenbaum and, 
	independently, Coullet and Tresser the study of renormalization operators 
	in dynamics has been very successful at explaining universality 
	phenomena observed in certain families of dynamical systems. The first 
	proof of existence of a hyperbolic fixed point for renormalization of 
	area-preserving maps was given by Eckmann, Koch and Wittwer in 1984. 
	However there are still many things that are unknown in this setting, in 
	particular regarding the invariant Cantor sets of infinitely renormalizable 
	maps.
	
	In this paper we show that the invariant Cantor set of period doubling type 
	of any infinitely renormalizable area-preserving map in the universality 
	class of the Eckmann-Koch-Wittwer renormalization fixed point is always 
	contained in a Lipschitz curve but never contained in a smooth curve. 
	This extends previous results by de Carvalho, Lyubich and Martens about 
	strongly dissipative maps of the plane close to unimodal maps to the 
	area-preserving setting. The method used for constructing the Lipschitz 
	curve is very similar to the method used in the dissipative case but 
	proving the nonexistence of smooth curves requires new techniques.
\end{abstract}

\section{Introduction}

The study of renormalization techniques in dynamics began in the 1970's in 
independent efforts by Feigenbaum (\cite{Feigenbaum-1978}, 
\cite{Feigenbaum-1979}) and Coullet and Tresser (\cite{Tresser_Coullet-1978}) 
to explain the observed universality phenomena in families of maps on the 
interval undergoing period doubling bifurcation. Acting as a microscope, the 
renormalization operator can describe the geometric structure of the maps in 
question at smaller and smaller scales. The existence of a hyperbolic fixed 
point of the renormalization operator explains the observed universality. It 
has also been shown that the infinitely renormalizable maps, i.e. those 
contained in the stable manifold of the renormalization fixed point, have 
invariant Cantor sets. The dynamics of any two infinitely renormalizable maps 
restricted to their respective invariant Cantor sets is topologically 
conjugate. This naturally leads to the question of whether or not this 
conjugacy can be smooth. If there is always a smooth conjugacy we say that the 
invariant Cantor sets are \textit{rigid}. It turns out that for infinitely 
renormalizable unimodal maps of the interval the invariant Cantor sets are 
indeed rigid. 

Since the introduction of renormalization in dynamics this formalism has been 
generalized in different directions. It has been particularly successful in 
one-dimensional dynamics, explaining universality for unimodal maps, critical 
circle maps and maps with a Siegel disk (see e.g. \cite{Lyubich-1999}, 
\cite{Avila_Lyubich-2011}, \cite{Sullivan-1992}, 
\cite{deFaria_edson_deMelo-2006}, \cite{Martens-1998}, \cite{Yampolsky-2001}, 
\cite{Yampolsky-2002}, \cite{Gaidashev_Yampolsky-2016} and references therein).

The rigorous study of renormalization for dissipative two-dimensional systems 
was started by Collet, Eckmann and Koch in \cite{Collet_Eckmann_Koch-1981a}. There they define a renormalization 
operator for strongly dissipative Hénon-like maps and show that the 
one-dimensional renormalization fixed point is also a hyperbolic fixed point 
for nearby dissipative maps. This result explains observed universality in 
families of such maps. In a subsequent paper by Gambaudo, van Strien and Tresser 
\cite{Gambaudo_vanStrien_Tresser-1989} it is shown that the infinitely renormalizable maps, i.e. those contained in the stable manifold of the hyperbolic fixed point, have an invariant Cantor set on which the dynamics is 
conjugate to the dyadic adding machine. A different renormalization operator 
for strongly dissipative Hénon-like maps was defined by de Carvalho, Lyubich 
and Martens in \cite{Carvalho_Lyubich_Martens-2005}. For this operator they 
show, in addition to the previous results, that the invariant Cantor sets are 
not rigid. More precisely, they show that a topological invariant of 
infinitely renormalizable strongly dissipative Hénon-like maps called the 
\textit{average Jacobian} is an obstruction to rigidity. If two infinitely 
renormalizable maps have different average Jacobians the conjugacy between 
their respective invariant Cantor sets cannot be smooth. Instead there is a 
form of probabilistic rigidity. Moreover they show that there is no smooth 
curve containing the invariant Cantor set of any infinitely renormalizable map.
In \cite{Lyubich_Martens-2011_survey} Lyubich and Martens also state that every 
invariant Cantor set of this form is contained in a rectifiable curve. For a 
proof, see the preprint \cite{Lyubich_Martens-2011_preprint}.

In \cite{Eckmann_Koch_Wittwer-1984} Eckmann, Koch and Wittwer introduced a 
renormalization operator for area-preserving maps of period doubling type of 
the plane and proved, using computer assistance, the existence of a hyperbolic 
fixed point. This explains previously observed universality phenomena in 
families of such maps. Further investigations of this renormalization have been 
done by Gaidashev and Johnson in \cite{Gaidashev_Johnson-2009a}, 
\cite{Gaidashev_Johnson-2009b}, \cite{Gaidashev_Johnson-2016} and by Gaidashev, 
Johnson and Martens in \cite{Gaidashev_Johnson_Martens-2016}. In these papers 
they prove existence of period doubling invariant Cantor sets for all 
infinitely renormalizable maps and also show that they are rigid. Again this 
rigidity can be compared to the situation for dissipative maps where the 
average Jacobian is a topological invariant and an obstruction to rigidity. 
Since all area-preserving maps have the same average Jacobian, one would 
expect, if indeed it is such a classifying invariant, that these Cantor sets 
are rigid. In \cite{Gaidashev_Johnson_Martens-2016} a conjecture is made that 
the average Jacobian is indeed such a classifying invariant.

In the present paper we address two more issues concerning the invariant 
Cantor sets of infinitely renormalizable area-preserving maps that draw 
parallels to the situation for dissipative maps. More precisely, we will 
explore the existence of Lipschitz or smooth curves containing these sets. 
As in the dissipative case it turns out that there are always Lipschitz 
curves containing the invariant Cantor sets but there is never a smooth curve. 
The central parts of the proof of the nonexistence of smooth curves uses 
different methods from the dissipative case however since they are not 
applicable to the area-preserving case.

\subsection{Structure of the paper}

In Section 2 we begin by giving a short introduction to the renormalization of 
area-preserving maps where we define the necessary objects and state the known 
results that will be used in the rest of the paper. At the end of Section 2 we 
state the main results of this paper.

Section 3 deals with the existence of a Lipschitz curve containing the 
invariant Cantor set of each infinitely renormalizable area-preserving map.

Lastly, in Section 4 we prove that there are no smooth curves containing the 
invariant Cantor set of any infinitely renormalizable area-preserving map.

\section{Area-preserving renormalization}

We consider two slightly different renormalization schemes: the one 
introduced by Eckmann, Koch and Wittwer in \cite{Eckmann_Koch_Wittwer-1984} and 
the one used by Gaidashev, Johnson and Martens in 
\cite{Gaidashev_Johnson_Martens-2016}. Both schemes are defined for exact 
symplectic diffeomorphisms of subsets of $ \R^{2} $. The maps are also 
required to be reversible, i.e. $ T\circ F\circ T = F^{-1} $ where $ 
T(x,y)=(x,-y) $ for every $ (x,y)\in\R^{2} $, and satisfy the twist condition 
\[ \dfrac{\d X}{\d y} \neq 0 \] where $ (X,Y) = F(x,y) $. In 
\cite{Gaidashev_Johnson_Martens-2016} it is also required that $ F(0,0)=(0,0) 
$. For such maps it is possible to find a generating function $ S=S(x,X) $ such 
that
\[ \begin{pmatrix}
	x \\
	-S_{1}(x,X)
\end{pmatrix}\overset{F}{\mapsto}\begin{pmatrix}
	X \\
	S_{2}(x,X)
\end{pmatrix} \, , \]
where subscripts denote partial derivatives. It can be shown that $ S_{1}(X,x) 
= S_{2}(x,X)\equiv s(x,X) $ so that $ F $ can also be written as
\begin{equation}
	\begin{pmatrix}
		x \\
		-s(X,x)
	\end{pmatrix} \overset{F}{\mapsto}\begin{pmatrix}
		X \\
		s(x,X)
	\end{pmatrix}\, .
\end{equation}
Using this formulation it is possible to write the differential $ \mathrm{D}F $ 
in terms of $ s $ using implicit differentiation of $ y=-s(X,x) $ and $ 
Y=s(x,X) $. The result is

\begin{equation}
\label{eq:differential}
	\renewcommand*{\arraystretch}{2.3}
	\mathrm{D}F(x,y) = \begin{pmatrix}
	-\dfrac{s_{2}(X(x,y),x)}{s_{1}(X(x,y),x)} & 
	-\dfrac{1}{s_{1}(X(x,y),x)} \\
	s_{1}(x,X(x,y))-s_{2}(x,X(x,y))\dfrac{s_{2}(X(x,y),x)}{s_{1}(X(x,y),x)}
	& -\dfrac{s_{2}(x,X(x,y))}{s_{1}(X(x,y),x)}
	\end{pmatrix} \, .
\end{equation} 

Letting $ \Lambda[F](x,y)=(\lambda_{F} x,\mu_{F} y) $ and $ 
\psi_{0}[F](x,y)=(\lambda_{F} x+p_{F},\mu_{F} y) $ the two renormalization 
schemes are then defined by 
\begin{align}
	R_{EKW}(F) & = \Lambda^{-1}[F]\circ F^{2} \circ \Lambda[F] \\
	R_{GJM}(F) & = \psi_{0}^{-1}[F] \circ F^{2} \circ \psi_{0}[F]
\end{align}
respectively. Here $ \lambda_{F}, \mu_{F}, p_{F} $ depend analytically on the 
map $ F $. In the remainder of the paper we will suppress $ F $ to avoid 
notational clutter. 

The existence of a hyperbolic fixed point for the EKW 
renormalization operator was proven in \cite{Eckmann_Koch_Wittwer-1984} using 
computer assistance. Similarly the operator $ R_{GJM} $ also has a hyperbolic 
fixed point which is related to the fixed point of $ R_{EKW} $ by a 
translation, as we will see later in the paper. We denote the fixed points by $ 
F_{EKW} $ and $ F_{GJM} $ respectively. For the 
renormalization fixed points the values of the rescalings used to define the 
renormalization operators are 
\begin{align*}
	\lambda_{EKW} & =\lambda_{GJM}=\lambda=-0.249\dots\, , \\
	\mu_{EKW} & =\mu_{GJM}=\mu=0.061\dots\, ,
\end{align*}
see e.g. \cite{Gaidashev_Johnson-2016}. 
The Fréchet derivative $ \mathcal{D}R_{EKW} $ has two eigenvalues outside the 
unit circle. One of them corresponds to the universal scaling observed in 
families of area-preserving maps and the other is $ \lambda^{-1} $. The 
eigenvalue $ \lambda^{-1} $ turns out to be related to a translational change 
of variables and this eigenvalue is eliminated by using the rescalings of $ 
R_{GJM} $. Thus at the fixed point $ F_{EKW} $ the renormalization $ R_{EKW} $ 
has a codimension $ 2 $ stable manifold whereas at the fixed point $ F_{GJM} $ 
the renormalization $ R_{GJM} $ has a codimension $ 1 $ stable manifold. We 
denote the stable and unstable manifolds by $ W^{s}(F) $ and $ W^{u}(F) $ 
respectively. Maps $ F $ contained in either of their stable manifolds $ 
W^{s}(F_{EKW}) $  or $ W^{s}(F_{GJM}) $ will be called \textit{infinitely 
renormalizable} with respect to the corresponding renormalization operator.  

In addition to being defined for area-preserving maps of the plane, $ R_{EKW} $ 
is also defined on the generating functions themselves according to 
\begin{equation}
	R_{EKW}(s)(x,X) = \mu^{-1}s(z(x,X),\lambda X)
\end{equation}
where $ z $ is a symmetric function, $ z(x,X)=z(X,x) $, satisfying the equation
\begin{equation}
\label{eq:z-equation}
	s(\lambda x,z(x,X))+s(\lambda X,z(x,X)) = 0\, .
\end{equation}

The following normalizations are used for the EKW renormalization in 
\cite{Gaidashev_Johnson-2016} and will be of use to us:
\begin{itemize}
	\item $ s(1,0)=0, $
	\item $ s_{1}(1,0)=1, $
	\item $ z(1,0)=z(0,1)=1 \, . $
	\item $ \mu=z_{1}(1,0), $
\end{itemize}
For more details on this see, for example, \cite{Eckmann_Koch_Wittwer-1984} or 
\cite{Gaidashev_Johnson-2016}. 

For any $ F\in W^{s}(F_{GJM}) $ there are analytically defined simply connected 
domains $ B_{0}(F) $ and $ B_{1}(F) $ that are disjoint and satisfy 

\begin{itemize}
	\item $ F^{2}(B_{0}(F))\cap B_{0}(F)\neq \emptyset $, and
	\item $ \psi_{w}(B_{0}(RF)\cup B_{1}(RF))\subset B_{w}(F) $
\end{itemize} 
where $ w\in\{0,1\} $ and $ \psi_{1}=F\circ\psi_{0} $. It is then possible to 
define a nested sequence of sets $ B^{n}_{w} $ for $ w\in\{0,1\}^{n} $ by
\begin{align*}
	B^{n}_{w0} & = \psi^{n}_{w}(B_{0}(R^{n}F)) \\
	B^{n}_{w1} & = \psi^{n}_{w}(B_{1}(R^{n}F))
\end{align*}
where $ \psi^{n}_{w}=\psi_{w_{1}}[F]\circ\dots\circ\psi_{w_{n}}[R^{n-1}F] $ for $ 
w = \{w_{1},\dots,w_{n}\}\in\{0,1\}^{n} $.

The sets $ B^{n}_{w} $ are nested as follows:
\[ B^{n}_{wv}\subset B^{n-1}_{w} \]
for $ w\in\{0,1\}^{n} $ and $ v\in\{0,1\} $, see Lemma 3.3 of 
\cite{Gaidashev_Johnson_Martens-2016}. This gives us the following schematic 
picture of the renormalization microscope.

\begin{figure}[H]
	\centering
	\begin{tikzpicture}
		\node at (0,0) {\includegraphics[scale=0.6]{./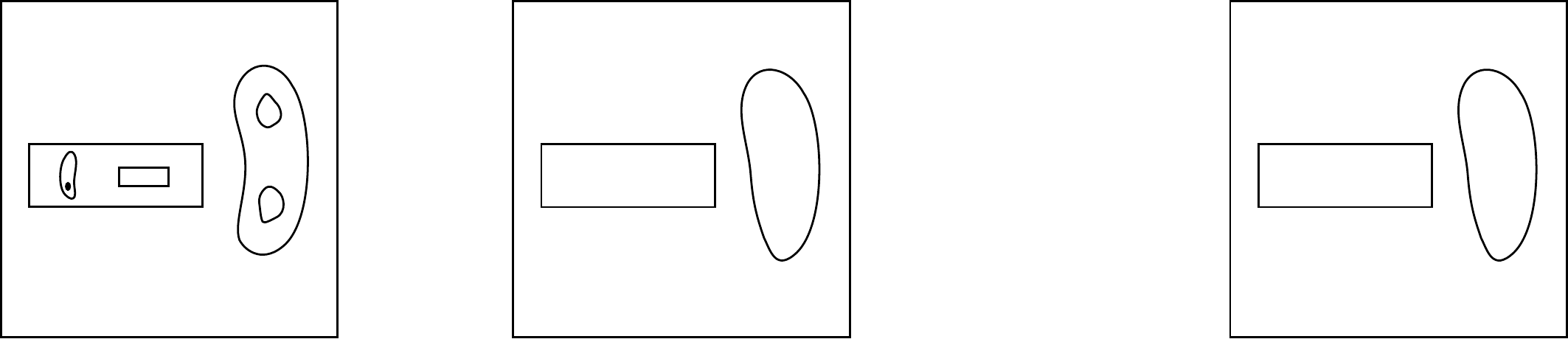}};
		\node at (-5,1.8) {$ F $};
		\node at (-0.8,1.8) {$ RF $};
		\node at (5,1.8) {$ R^{n}F $};
		\node at (-6,-0.7) {$ B_{0} $};
		\node at (-4.4,1.1) {$ B_{1} $};
		
		\draw[->,>=stealth] (-2,-0.9) to[out=200, in=300] 
			node[shift={(0.8,-0.3)}] {$ \psi_{0}[F] $} (-5.2,-0.4);
		\draw[->,>=stealth] (-2,0.9) to[out=160,in=40] 
			node[shift={(0,0.3)}] {$ \psi_{1}[F] $} (-4,0.7);
		\draw[->,>=stealth] (1.2,-0.9) to[out=200,in=300] (-1,-0.4);
		\draw[->,>=stealth] (1.2,0.9) to[out=160,in=40] (0.2,0.7);
		\draw[->,>=stealth] (4,-0.9) to[out=200,in=320] (2.8,-0.9);
		\draw[->,>=stealth] (4,0.9) to[out=160,in=20] (2.8,0.9);
		\draw[->,>=stealth] (4.0,-1.1) to[out=200,in=285] node[shift={(0,-0.3)}] 
			{$ \psi^{n}_{w}[F] $} (-5.9,-0.25);
		
		\node at (1.5,-0.9) {$ \dots $};
		\node at (1.5,0.9) {$ \dots $};
		\node at (2.5,-0.9) {$ \dots $};
		\node at (2.5,0.9) {$ \dots $};
	\end{tikzpicture}
	\caption{A schematic picture of the renormalization microscope.}
\end{figure}

Using the nesting of the sets $ B_{w}^{n} $ we can make the following 
definition.
\begin{definition}
	The \textit{tip} of a map $ F\in W^{s}(F_{GJM}) $ is the point $ \tau = 
	\tau(F) = \bigcap_{n\ge 0}B^{n}_{0} $.
\end{definition}

In particular the tip $ \tau(F_{GJM}) $ of the renormalization fixed point will 
be of interest. It can be calculated as the fixed point of $ \psi_{0}(F_{GJM}) 
$.

\begin{align*}
	\tau(F_{GJM}) & = (x_{\tau},y_{\tau}) \\
	& = \psi_{0}(\tau(F_{GJM})) \\
	& = (\lambda x_{\tau} + p,\mu y_{\tau})
\end{align*} from which we can see that $ 
\tau(F_{GJM})=\left(\frac{p}{1-\lambda},0\right) $. The corresponding 
point for the EKW renormalization scheme is the origin $ (0,0) $ due to the 
absence of translation by $ p $ in $ \Lambda $. The nesting also allows us to 
define the set \[ \mathcal{O}_{F} =  
\bigcap_{n=1}^{\infty}\bigcup_{w\in\{0,1\}^{n}}B^{n-1}_{w} \] for any $ F\in 
W^{s}(F_{GJM}) $. In \cite{Gaidashev_Johnson_Martens-2016} it is proven that 
the set $ \mathcal{O}_{F} $ is an invariant Cantor set on which the dynamics of 
$ F $ is conjugate to the dyadic adding machine. These invariant Cantor sets 
are the subject of this paper.
\\

The following results will be needed in this paper.

\begin{lemma}[Lemma 7.1 of \cite{Gaidashev_Johnson-2009b}]
	\label{lem:prelim_contraction}
	For every $ F\in W(\rho) $, there exists a simply connected closed set $ 
	B_{F} $ such that $ B^{1}_{0}(F)\defeq \Lambda(B_{F})\subset B_{F} $ 
	and $ B_{1}^{1}(F)\defeq (F\circ\Lambda)(B_{F})\subset B_{F} $ are 
	disjoint, $ 
	F(B_{1}^{1}(F))\cap B_{0}^{1}(F)\neq \emptyset $ and 
	\[ \max \left\{\norm{\mathrm{D}\Lambda}_{B_{F}}, \, 
	\norm{\mathrm{D}(F\circ\Lambda)}_{B_{F}}\right\}\le 
	\theta\defeq 0.272\, . \]
\end{lemma}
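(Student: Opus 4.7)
The plan is to reduce the general case $F \in W(\rho)$ to the renormalization fixed point $F_{EKW}$ by a continuity argument, and to verify the conclusion at the fixed point itself using the explicit structure of $\Lambda$ together with the (computer-assisted) numerical information about $F_{EKW}$ from \cite{Eckmann_Koch_Wittwer-1984}.

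At the fixed point the easy half of the norm bound is essentially free: since $\Lambda$ is linear and diagonal with entries $|\lambda| \approx 0.249 < 0.272 = \theta$ and $|\mu| \approx 0.061 < \theta$, one has $\|D\Lambda\| = |\lambda| < \theta$ on any set. The serious work is the construction of $B_F$ itself and the bound on $\|D(F \circ \Lambda)\|$. I would look for $B_F$ as a simply connected closed neighborhood of the short orbit segment through the tip, chosen just large enough that $B^{1}_{0} = \Lambda(B_F)$ and $B^{1}_{1} = (F \circ \Lambda)(B_F)$ both lie inside $B_F$, are disjoint, and satisfy $F(B^{1}_{1}) \cap B^{1}_{0} \neq \emptyset$; note that the last intersection property is equivalent, after applying $\Lambda^{-1}$, to $R_{EKW}(F)(B_F) \cap B_F \neq \emptyset$, which is the natural compatibility with the renormalization microscope. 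Because $|\mu|$ is so small, $\Lambda(B_F)$ is a very thin horizontal sliver, and the chain rule $D(F \circ \Lambda)(x,y) = DF(\Lambda(x,y)) \cdot D\Lambda$ means one only needs to bound the entries of $DF$ on this thin sliver. Using the explicit representation \eqref{eq:differential} of $DF$ in terms of the generating function $s = s_{EKW}$, the problem reduces to controlling $s_{1}$ and $s_{2}$ on a concrete compact set, and this is exactly the type of estimate for which the rigorous interval-arithmetic machinery of \cite{Eckmann_Koch_Wittwer-1984} is designed.

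To pass from the fixed point to the full neighborhood $W(\rho)$ I would exploit the slack $0.272 - 0.249 = 0.023$ in the contraction bound: since $\lambda_F, \mu_F, p_F$ depend analytically on $F$ and $DF$ depends continuously on $F$ in the appropriate topology, for $\rho$ small enough the same (or a slightly enlarged) set $B_F$ continues to satisfy the four geometric conditions and the sharper fixed-point estimate gets relaxed only slightly, staying below $\theta$. The main obstacle in the whole argument is therefore the hard analytic input at the fixed point: verifying $\|D(F_{EKW} \circ \Lambda)\| \le \theta$ on a set $B_F$ large enough to contain the first two levels of the renormalization microscope. This step cannot be done by hand in closed form and genuinely relies on rigorous computer-assisted bounds on $s_{EKW}$ and its partial derivatives; once such bounds are in hand, the geometric verifications and the perturbation from $F_{EKW}$ to $W(\rho)$ are routine.
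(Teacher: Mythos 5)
This lemma is not proved in the paper at all: it is quoted verbatim as Lemma 7.1 of \cite{Gaidashev_Johnson-2009b}, where it is established by rigorous computer-assisted (interval-arithmetic) estimates, so there is no in-paper argument to compare against. Judged on its own terms, your outline correctly identifies the anatomy of such a proof: the bound $\norm{\mathrm{D}\Lambda}=\max(\absv{\lambda},\absv{\mu})=\absv{\lambda}\approx 0.249<\theta$ is immediate; the identity $F(B^{1}_{1})\cap B^{1}_{0}\neq\emptyset \Leftrightarrow R_{EKW}(F)(B_{F})\cap B_{F}\neq\emptyset$ is the right way to read the intersection condition; and the reduction of $\norm{\mathrm{D}(F\circ\Lambda)}$ via the chain rule and the representation \eqref{eq:differential} to bounds on $s_{1}, s_{2}$ over a concrete compact set is exactly what the cited computation verifies. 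You are also right that the smallness of $\absv{\mu}$ damps the second column of the product matrix, so the estimate really hinges on the first column of $\mathrm{D}F$ on a thin horizontal strip.

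Two caveats. First, and decisively: by your own admission the proposal defers the one nontrivial step --- the verification of $\norm{\mathrm{D}(F\circ\Lambda)}\le 0.272$ on a domain $B_{F}$ large enough to carry the microscope --- to a computation that is not performed. That step is the entire content of the lemma; without it this is a strategy, not a proof, and there is no soft argument that supplies it (a priori $\norm{\mathrm{D}F}\cdot\absv{\lambda}$ could exceed $\theta$, so genuine numerics are unavoidable). Second, your perturbation step is framed as spreading an estimate at the exact fixed point $F_{EKW}$ out to $W(\rho)$ by continuity; in \cite{Gaidashev_Johnson-2009b} the set $W(\rho)$ is a ball around an \emph{approximate} fixed point (the exact fixed point is itself only known through computer assistance), and the bounds are obtained uniformly over the whole ball by interval enclosures rather than by a qualitative continuity argument. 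A purely qualitative ``for $\rho$ small enough'' perturbation would not by itself deliver the explicit constant $\theta=0.272$ on a prescribed $\rho$, which is what the downstream Lipschitz construction in Section 3 actually consumes.
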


Note that this lemma is formulated for the EKW renormalization scheme. Here, $ 
W(\rho) $ is the set of infinitely renormalizable area-preserving maps 
in a ball of radius $ \rho $ around an approximation of the fixed point for the 
EKW renormalization. A similar result is also valid for the renormalization 
scheme used in $ R_{GJM} $ with a slightly better 
bound, see Proposition 2.3 and Lemma 3.2 of 
\cite{Gaidashev_Johnson_Martens-2016}. However since the rescalings for the EKW 
renormalization scheme differ from the renormalization scheme used in 
\cite{Gaidashev_Johnson_Martens-2016} by a translation only depending 
on $ F $ the same bound on the differential is also valid in that setting.

Next lemma states that the rescalings $ \psi_{i}[F] $ converge uniformly to 
the rescalings of the fixed point $ F_{GJM} $ under iteration of the 
renormalization operator $ R_{GJM} $.

\begin{lemma}[Lemma 3.1 of \cite{Gaidashev_Johnson_Martens-2016}]
\label{lem:uniform_convergence}
	For every $ F\in W^{s}(F_{GJM}) $ and $ i\in\{0,1\} $ \[ 
	\lim_{n\to\infty}\norm{\psi_{i}[R^{n-1}_{GJM}F]-\psi_{i}[F_{GJM}]}_{C^{2}}=0.
	 \]
\end{lemma}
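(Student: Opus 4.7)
The plan is to exploit the continuous dependence of the rescaling data $(\lambda_F, \mu_F, p_F)$ on $F$ together with the defining property $F \in W^{s}(F_{GJM})$, which forces $R_{GJM}^{n-1} F \to F_{GJM}$ in the ambient Banach space of analytic maps on a fixed domain $\Omega$. Because that space consists of analytic functions with uniform bounds, Cauchy's integral formula upgrades this convergence to convergence in $C^{k}$ on any compact $K \subset \Omega$ and for any $k$; in particular $\norm{R^{n-1}F - F_{GJM}}_{C^{2}(K)} \to 0$, and the sequence is uniformly bounded in $C^{3}(K)$.

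First I would treat $\psi_{0}$. Since $\lambda_F$, $\mu_F$ and $p_F$ depend analytically, hence continuously, on $F$ at $F_{GJM}$,
\[ \lambda_{R^{n-1}F}\to\lambda,\quad \mu_{R^{n-1}F}\to\mu,\quad p_{R^{n-1}F}\to p. \]
Since $\psi_{0}[G](x,y)=(\lambda_{G}x+p_{G},\mu_{G}y)$ is affine and its $C^{2}$ norm on a compact set is controlled by $|\lambda_G|+|\mu_G|+|p_G|$ plus a constant depending on the diameter of $K$, this gives $\norm{\psi_{0}[R^{n-1}F]-\psi_{0}[F_{GJM}]}_{C^{2}}\to 0$ immediately. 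The contraction bound of \autoref{lem:prelim_contraction} guarantees that $\psi_{0}[R^{n-1}F](K)$ remains inside a fixed compact subset of $\Omega$ for all large $n$, which is the geometric input needed to make the next step work.

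For $\psi_{1}[F]=F\circ\psi_{0}[F]$ I would split
\[ \psi_{1}[R^{n-1}F]-\psi_{1}[F_{GJM}] = (R^{n-1}F-F_{GJM})\circ\psi_{0}[F_{GJM}]+\bigl(R^{n-1}F\circ\psi_{0}[R^{n-1}F]-R^{n-1}F\circ\psi_{0}[F_{GJM}]\bigr). \]
The first summand tends to $0$ in $C^{2}$ because $R^{n-1}F\to F_{GJM}$ in $C^{2}$ on a compact set containing $\psi_{0}[F_{GJM}](K)$, and post-composition with a fixed affine map preserves $C^{2}$ convergence. For the second summand, the chain rule together with the mean value theorem applied to first and second derivatives bounds the $C^{2}$ norm by a constant multiple of $\sup_{n}\norm{R^{n-1}F}_{C^{3}}$ times $\norm{\psi_{0}[R^{n-1}F]-\psi_{0}[F_{GJM}]}_{C^{2}}$; the first factor is finite by the Cauchy estimates above and the second has just been shown to vanish.

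The main technical obstacle to turning this sketch into a formal proof is verifying that the sequence $\{R^{n-1}F\}$ truly converges in a space of analytic maps strong enough that the Cauchy-type passage from uniform to $C^{k}$ convergence is automatic, and that $F \mapsto (\lambda_F,\mu_F,p_F)$ is analytic at $F_{GJM}$. Both facts are encoded in the computer-assisted hyperbolic fixed point theorems of \cite{Eckmann_Koch_Wittwer-1984} and \cite{Gaidashev_Johnson_Martens-2016}, so once those results are invoked the argument reduces to the elementary composition and rescaling estimates above.
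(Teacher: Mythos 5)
This lemma is quoted verbatim from Lemma 3.1 of \cite{Gaidashev_Johnson_Martens-2016}; the paper under review gives no proof of its own, so there is nothing internal to compare against. Your reconstruction is the standard and correct argument: convergence $R_{GJM}^{n-1}F\to F_{GJM}$ in the analytic Banach space (the meaning of $F\in W^{s}(F_{GJM})$ for a hyperbolic fixed point), Cauchy estimates to upgrade to $C^{k}$ convergence with uniform $C^{3}$ bounds on compacta, analytic dependence of $(\lambda_{F},\mu_{F},p_{F})$ on $F$ (stated explicitly in Section 2), and elementary composition estimates for $\psi_{1}[F]=F\circ\psi_{0}[F]$. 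The only point deserving explicit care in a write-up is the one you already flag, namely that $\psi_{0}[R^{n-1}F](K)$ eventually lies in a fixed compact subset of the common domain of analyticity so that the mean-value and chain-rule estimates for the second summand are legitimate; with that checked, the argument is complete.
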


Lastly we will also need the rigidity result from 
\cite{Gaidashev_Johnson_Martens-2016}.

\begin{theorem}[Theorem 4.1 of \cite{Gaidashev_Johnson_Martens-2016}]
\label{thm:rigidity}
	The Cantor set $ \mathcal{O}_{F} $, with $ F\in W^{s}_{\loc}(F_{GJM}) $, is 
	rigid. Namely, There exists an $ \alpha_{0}>0 $ such that 
	\[ \mathcal{O}_{F} = \mathcal{O}_{F_{GJM}}\mod{(C^{1+\alpha})} \]
	for every $ 0<\alpha<\alpha_{0} $.
\end{theorem}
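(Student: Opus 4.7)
The plan is to exploit the hyperbolicity of $R_{GJM}$ at $F_{GJM}$ together with the uniform contraction bounds from \autoref{lem:prelim_contraction} to upgrade the natural symbolic conjugacy between $\mathcal{O}_F$ and $\mathcal{O}_{F_{GJM}}$ to a $C^{1+\alpha}$ conjugacy in the Whitney sense on the Cantor sets. The underlying topological conjugacy $h$ is forced: every point of $\mathcal{O}_F$ has a unique address $w=(w_1,w_2,\dots)\in\{0,1\}^\N$ via the nested intersection $\bigcap_n B^{n-1}_{w_1\cdots w_n}(F)$, so we simply declare $h(\pi_{F_{GJM}}(w))=\pi_F(w)$. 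Everything reduces to showing that this $h$ has Hölder-differentiable local jets.

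First I would use the fact that $F\in W^s_{\loc}(F_{GJM})$ means $\|R^n_{GJM}F-F_{GJM}\|_{C^2}\le C\sigma^n$ for some $\sigma\in(0,1)$, exploiting the codimension-one hyperbolic structure of the fixed point. By analyticity of the dependence of $\lambda_F,\mu_F,p_F$ on $F$, this transfers directly to $\|\psi_i[R^{n-1}F]-\psi_i[F_{GJM}]\|_{C^2}\le C\sigma^n$, refining \autoref{lem:uniform_convergence} to a quantitative exponential estimate. Next I would compare the two compositions
\[ \Psi^n_w[F]=\psi_{w_1}[F]\circ\cdots\circ\psi_{w_n}[R^{n-1}F],\qquad \Psi^n_w[F_{GJM}]=(\psi_{w_1}[F_{GJM}])^n\text{-fold analogue}, \]
on the relevant pieces $B_0(R^nF)\cup B_1(R^nF)$. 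A telescoping argument combined with the contraction bound $\max\{\|\mathrm{D}\psi_0\|,\|\mathrm{D}\psi_1\|\}\le\theta$ yields $\|\Psi^n_w[F]-\Psi^n_w[F_{GJM}]\|_{C^1}\le C'(\theta\sigma)^n$, while each individual map contracts by $\theta^n$. Consequently the transition map $\Psi^n_w[F]\circ(\Psi^n_w[F_{GJM}])^{-1}$, restricted to the $n$-th piece, is close to its affine part in $C^1$ with error exponentially smaller than the piece's diameter.

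From this I would extract, at each point $p=\pi_{F_{GJM}}(w)\in\mathcal{O}_{F_{GJM}}$, a candidate derivative $A(p)=\lim_n \mathrm{D}(\Psi^n_w[F])\bigl(\mathrm{D}\Psi^n_w[F_{GJM}]\bigr)^{-1}$ and verify it is Hölder in $p$ with some exponent $\alpha_0$ determined by $\log\sigma/\log\theta^{-1}$ (compare the rate $(\theta\sigma)^n$ to the piece scale $\theta^n$). Since the candidate jets satisfy the Whitney compatibility condition on $\mathcal{O}_{F_{GJM}}$ with exponent $1+\alpha$ for every $0<\alpha<\alpha_0$, the Whitney extension theorem produces a $C^{1+\alpha}$ diffeomorphism of a neighborhood of $\mathcal{O}_{F_{GJM}}$ agreeing with $h$ on the Cantor set.

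The hard part, and what distinguishes the area-preserving setting from the dissipative one, is the anisotropy of the contractions: $\lambda\approx -0.25$ and $\mu\approx 0.06$ give very different horizontal and vertical scales, so the pieces $B^n_w$ are extremely elongated and the derivative of $\Psi^n_w$ is badly conditioned. Controlling the $C^1$ closeness of the two compositions in a manner compatible with this anisotropy — so that the induced error on the transition map remains $o(\theta^n)$ after inverting $\mathrm{D}\Psi^n_w[F_{GJM}]$ — is the technical core of the argument, and presumably forces a careful use of the symplectic (and twist) structure to ensure that the off-diagonal mixing in \eqref{eq:differential} does not amplify the error beyond the scale at which pieces are separated.
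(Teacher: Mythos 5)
First, a point of order: the paper does not prove this statement. It is imported verbatim as Theorem~4.1 of \cite{Gaidashev_Johnson_Martens-2016} and used as a black box (only in the final corollary of Section~4), so there is no in-paper proof to compare yours against. Your sketch does follow the general template by which such rigidity theorems are proved in renormalization theory (exponential convergence along the stable manifold, comparison of the two iterated function systems, Whitney-type $C^{1+\alpha}$ extension), so the architecture is reasonable.

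However, as a proof it has a genuine gap at its quantitative core. The telescoping estimate you claim, $\norm{\Psi^n_w[F]-\Psi^n_w[F_{GJM}]}_{C^1}\le C'(\theta\sigma)^n$, is not what telescoping gives: replacing the $k$-th factor costs $C\sigma^{k-1}$ damped by the Lipschitz constant $\theta^{k-1}$ of the outer composition, so the sum is a convergent geometric series, i.e.\ $O(1)$, dominated by the first few terms --- as it must be, since $\mathcal{O}_F$ and $\mathcal{O}_{F_{GJM}}$ are genuinely different sets and the conjugacy is not close to the identity. What actually has to converge is the derivative cocycle $\mathrm{D}\Psi^n_w[F]\bigl(\mathrm{D}\Psi^n_w[F_{GJM}]\bigr)^{-1}$, and here the anisotropy you correctly flag is not a technical nuisance but the entire content of the theorem: $\bigl(\mathrm{D}\Psi^n_w[F_{GJM}]\bigr)^{-1}$ has norm of order $\mu^{-n}$ with $\mu\approx 0.061$, so an error of order $\sigma^n$ in the rescalings survives the inversion only if $\sigma$ beats the ratio of the two contraction rates. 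This is exactly the mechanism by which rigidity \emph{fails} for dissipative H\'enon-like maps with distinct average Jacobians in \cite{Carvalho_Lyubich_Martens-2005}, so no soft argument of the kind you outline can settle it; one needs the specific (computer-assisted) bounds on $\lambda$, $\mu$ and the contraction rate in $W^s_{\loc}$ that \cite{Gaidashev_Johnson_Martens-2016} establishes, and which also produce the explicit $\alpha_0>0.237$. Acknowledging this step as ``the technical core'' without supplying those estimates leaves the proof incomplete precisely where the theorem could be false.
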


The constant $ \alpha_{0} $ appearing in \autoref{thm:rigidity} 
satisfies $ \alpha_{0}>0.237 $, see \cite{Gaidashev_Johnson_Martens-2016}.
\\

We will also need a property of twist maps called the \textit{ratchet 
phenomenon}. It says that for a twist map satisfying \[ \dfrac{\d X}{\d y} > 
a > 0 \] there are horizontal cones $ \Theta_{h} $ and vertical cones $ 
\Theta_{v} $ such that if $ p^{\prime}\in p+\Theta_{v} $ then $ 
F(p^{\prime})\in F(p)+\Theta_{h} $ and that the angle of the cones depend only 
on $ a $, see e.g. Lemma 12.1 of \cite{Gole-2001-book}. The same is true 
for negative twist maps with \[ \dfrac{\d X}{\d y} < a < 0 \, . \] This 
condition is clearly satisfied on $ B_{F} $ for all maps $ F $ considered here 
since it is compact. We will extend this to constant cone fields $ 
\Theta_{h}(p) $ and $ \Theta_{v}(p) $ so that at every point $ p $ $ 
\Theta_{h}(p) $ and $ \Theta_{v}(p) $ are just copies of $ \Theta_{h} $ and $ 
\Theta_{v} $ in the tangent space at $ p $. Using these cone fields we see that 
the ratchet phenomenon also implies that the differential of $ F $ maps the 
vertical cone field into the horizontal cone field in the corresponding tangent 
spaces. Thus 
\[ \mathrm{D}_{p} F(\Theta_{v}(p)) \subset \Theta_{h}(F(p)) \, . \] More 
precisely a positive twist map maps the half cone $ \Theta_{v}^{+}(p) $ into 
the half cone $ \Theta_{h}^{+}(F(p)) $ and $ \Theta_{v}^{-}(p) $ into $ 
\Theta_{h}^{-}(F(p)) $. A negative twist map changes the signs, i.e. $ 
\Theta_{v}^{+}(p) $ is mapped into $ \Theta_{h}^{-}(F(p)) $ and $ 
\Theta_{v}^{-}(p) $ is mapped into $ \Theta_{h}^{+}(F(p)) $.
\\

The results about the structure of the invariant Cantor sets of infinitely renormalizable area-preserving maps proven in this paper are the following two theorems.

\begin{theorem}
	\label{thm:lipschitzCurves}
	Every map $ F\in W^{s}(F_{GJM}) $ admits a Lipschitz curve containing its 
	invariant Cantor set $ \mathcal{O}_{F} $.
\end{theorem}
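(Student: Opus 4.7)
The plan is to construct the Lipschitz curve as a uniform limit of curves $\gamma_n$ built by following the renormalization microscope, analogously to the construction of de Carvalho, Lyubich and Martens in the dissipative setting, and to obtain the Lipschitz bound by tracking tangent directions inside a horizontal cone field supplied by the twist condition.

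First I would fix a smooth initial curve $\gamma_{0}$ inside $B_{F}$ that meets both $B_{0}$ and $B_{1}$ and whose tangent at every point lies in a prescribed horizontal cone $\Theta_{h}$; such a curve exists because $B_{F}$ is compact and simply connected and $B_{0},B_{1}$ are disjoint subdomains. I would then inductively define $\gamma_{n}$ so that at level $n$ it consists of the $2^{n}$ pieces $\psi^{n}_{w}(\gamma_{0})$, $w\in\{0,1\}^{n}$, arranged in the natural Cantor ordering and joined by short connecting arcs placed inside the enclosing level-$(n-1)$ boxes, again with tangents in $\Theta_{h}$.

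For convergence I would invoke \autoref{lem:prelim_contraction}: each composition $\psi^{n}_{w}$ has $C^{1}$-norm bounded by $\theta^{n}$ with $\theta=0.272$, so every piece $\psi^{n}_{w}(\gamma_{0})$ and every connector at level $n$ has diameter $O(\theta^{n})$. Since there are at most $2^{n}$ such objects and $2\theta<1$, the total length of $\gamma_{n}$ stays uniformly bounded and $\gamma_{n}$ converges in Hausdorff distance to a rectifiable curve $\gamma$. By construction $\gamma$ meets every $B^{n}_{w}$, so $\gamma\supset\bigcap_{n}\bigcup_{w}B^{n}_{w}=\mathcal{O}_{F}$.

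The delicate part is the Lipschitz estimate, which reduces to showing that tangent vectors of $\gamma_{n}$ remain in a common horizontal cone $\Theta_{h}$ uniformly in $n$ and $w$. The differential $\mathrm{D}\psi_{0}[F]$ is essentially diagonal with vertical factor $|\mu|\approx 0.061$ much smaller than the horizontal factor $|\lambda|\approx 0.249$, so it strictly narrows any horizontal cone. For $\psi_{1}[F]=F\circ\psi_{0}[F]$, the strong $\mu$-contraction of $\mathrm{D}\psi_{0}$ places $\mathrm{D}\psi_{0}(\Theta_{h})$ very close to the $x$-axis, and then the twist differential $\mathrm{D}F$, which is uniformly bounded on the compact domain $B_{F}$, maps this very narrow cone into a horizontal cone whose opening can be estimated explicitly from \eqref{eq:differential}. \autoref{lem:uniform_convergence} then transfers the bounds, established near the fixed point $F_{GJM}$, uniformly to all $\psi_{i}[R^{n-1}F]$. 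The main obstacle is to produce a single $\Theta_{h}$ that is simultaneously invariant under $\mathrm{D}\psi_{0}[R^{k}F]$ and $\mathrm{D}\psi_{1}[R^{k}F]$ for every $k$ and every admissible composition: this requires balancing the possible growth of the cone opening under $\mathrm{D}F$ against the strong vertical contraction of $\mathrm{D}\psi_{0}$ preceding each occurrence of $F$, even for arbitrarily long alternating words $w\in\{0,1\}^{n}$.
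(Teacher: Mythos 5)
Your skeleton --- microscope pieces $\psi^{n}_{w}(\gamma_{0})$ plus connectors, a total-length bound from $2\theta<1$, and a limiting argument --- is the paper's, and the length estimate in your convergence paragraph is essentially correct. The genuine gap is in what you call the delicate part. You assert that the Lipschitz estimate \emph{reduces to} an invariant horizontal cone field, and then you explicitly leave the simultaneous invariance of a single cone $\Theta_{h}$ under all the $\mathrm{D}\psi_{i}[R^{k}F]$ as ``the main obstacle''; so the argument is not closed. Worse, the mechanism you propose for it would fail as stated: the ratchet/twist property recorded in Section 2 sends \emph{vertical} cones into horizontal cones and says nothing about $\mathrm{D}F(\Theta_{h})$. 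A nearly horizontal vector is sent by $\mathrm{D}F$ to a vector along the first column of \eqref{eq:differential}, whose direction is not controlled by the twist condition alone, so ``narrow cone about the $x$-axis $\mapsto$ horizontal cone'' does not follow from anything you have quoted; it would require separate (presumably computer-assisted) bounds on $s$. And even granting an invariant cone, extracting a Lipschitz curve from it would further require the pieces and connectors to be monotonically ordered in $x$, which you have not addressed.

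The good news is that the detour is unnecessary: the Lipschitz property does \emph{not} reduce to a cone condition. Since the total length of $\gamma_{n}$ is uniformly bounded (your own estimate, driven by $2\theta<1$), parametrize each $\gamma_{n}$ at constant speed on $[0,1]$; these parametrizations are uniformly Lipschitz, Arzelà-Ascoli gives a uniformly convergent subsequence, and the limit is a Lipschitz curve meeting every $B^{n}_{w}$, hence containing $\mathcal{O}_{F}$. This is exactly how the paper proceeds (\autoref{lem:contraction}, \autoref{thm:fixed_point_lipschitz} and its corollary): each piece $\psi_{w}\circ\gamma_{k}$ is reparametrized affinely on an interval of length $\theta$, so the contraction $\theta$ exactly cancels the factor $\theta^{-1}$ from the reparametrization and the Lipschitz constant never grows, with the leftover $1-2\theta>0$ of the parameter interval absorbing the connectors; the paper even remarks that the twist/symplectic structure plays no role in this theorem. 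Note finally that Hausdorff convergence alone does not produce a curve --- you need equicontinuity of the parametrizations, which is precisely what Arzelà-Ascoli supplies.
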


\begin{theorem}
	\label{thm:noSmoothCurves}
	There is no smooth curve containing $ \mathcal{O}_{F} $ for any $ F\in 
	W^{s}_{loc}(F_{GJM}) $.
\end{theorem}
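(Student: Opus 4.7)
I would argue by contradiction: assume $\mathcal{O}_F\subset\gamma$ for some smooth $\gamma$, and analyze $\gamma$ locally at the tip $\tau=\tau(F)$. The key is the approximate self-similarity of $\mathcal{O}_F$ at $\tau$ coming from the compositions $\psi_0^n\defeq\psi_0[F]\circ\psi_0[RF]\circ\cdots\circ\psi_0[R^{n-1}F]$, which are affine with diagonal linear parts $\prod_{k=0}^{n-1}\Lambda[R^k F]$. By \autoref{lem:uniform_convergence} these factors converge exponentially fast to $\Lambda=\mathrm{diag}(\lambda,\mu)$, so the effective rescaling at $\tau$ is $\Lambda^n$ up to bounded multiplicative constants.

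Because $|\lambda|\approx 0.249$ is substantially larger than $\mu\approx 0.061$, the direction of $\Lambda^n v$ collapses onto the horizontal axis for every $v$ with nonzero first coordinate. Applied to $v=q-\tau(R^n F)$ with generic $q\in\mathcal{O}_{R^n F}$, this shows that every sequence of Cantor set points converging to $\tau$ does so tangent to the horizontal axis, so the tangent of $\gamma$ at $\tau$ must be horizontal, and I may write $\gamma$ locally as the graph $y=f(x)$ with $f(\tau_x)=\tau_y$ and $f'(\tau_x)=0$. Substituting $\psi_0^n(q)\in\gamma$ into the graph equation and dividing by $\mu^n$ yields, asymptotically,
\[
q_2-\tau_y \;=\; \sum_{k\ge 2}\frac{f^{(k)}(\tau_x)}{k!}\Bigl(\frac{\lambda^k}{\mu}\Bigr)^{\!n}(q_1-\tau_x)^k + o(1),
\]
which must hold for every $q\in\mathcal{O}_{R^n F}$ and every $n\ge 0$. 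The numerical values give $\lambda^2/\mu>1$ while $|\lambda|^k/\mu<1$ for all $k\ge 3$, so the $k=2$ term would blow up as $n\to\infty$ unless $f''(\tau_x)=0$; with that vanishing, every remaining term tends to zero, forcing $q_2=\tau_y$ for every generic $q$. Hence $\mathcal{O}_F$ would be contained in the horizontal line $\{y=\tau_y\}$.

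The contradiction then comes from reversibility: since $\tau$ lies on the symmetry axis of the involution $T$, if $F(\tau)$ did too then $TF(\tau)=F(\tau)$ and the relation $TFT=F^{-1}$ would give $F(\tau)=F^{-1}(\tau)$, hence $F^2(\tau)=\tau$ — contradicting the conjugacy of $F|_{\mathcal{O}_F}$ to the aperiodic dyadic adding machine. So $F(\tau)\in\mathcal{O}_F$ lies off the line, producing the desired contradiction.

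The main obstacles I anticipate are twofold. First, propagating the self-similarity analysis uniformly in $F\in W^{s}_{\loc}(F_{GJM})$ — rather than only at the fixed point — requires carefully combining the exponential $C^2$ convergence of \autoref{lem:uniform_convergence} with the quantitative contraction in \autoref{lem:prelim_contraction} in order to control both the non-asymptotic part of $\psi_0^n$ and the accumulated Taylor remainders of $f$. Second, and more delicate, the argument rests on the strict inequality $\lambda^2\ne\mu$, and the numerical values $\lambda^2\approx 0.062$ and $\mu\approx 0.061$ differ only in the third decimal place; should this inequality fail, the Taylor analysis would only force $\mathcal{O}_F$ onto a parabola, and an auxiliary argument — presumably by repeating the tangent analysis at the tip of a deeper sub-piece $B^n_w$ with $w\ne 0\cdots 0$ and comparing the resulting tangent lines — would be needed to rule out this parabolic possibility.
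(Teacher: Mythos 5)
You take a genuinely different route from the paper, and it is worth comparing them. The paper never Taylor-expands the curve at the tip: it works entirely with the twist condition. It first computes from the generating-function fixed-point equation that $\partial X/\partial x(\tau)<0$ at the tip (\autoref{lem:tipestimates}); then, using the ratchet phenomenon (a negative twist map sends the two halves of the vertical cone into opposite halves of the horizontal cone) together with $F^{2^n}\vert_{B_0^n}=\psi_0^n\circ F\circ\psi_0^{-n}$, it shows that any continuous invariant \emph{direction} field would have to take values near the horizontal but on opposite sides of the vertical at the points $F^{\pm 2^n}(\tau)$, which accumulate on $\tau$ — contradicting continuity (\autoref{thm:noDirectionField}). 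An orientation argument upgrades this to the nonexistence of a continuous invariant \emph{line} field (\autoref{lem:noDirectionField}), which rules out any $C^1$ curve, since the tangent lines of such a curve along the Cantor set form exactly such a field. The general case $F\in W^s_{\loc}(F_{GJM})$ then follows in one line from the $C^{1+\alpha}$ rigidity of \autoref{thm:rigidity}, rather than from re-running the asymptotics map by map. Your closing observation — that $F(\tau)$ cannot lie on the symmetry axis, since reversibility would then force $F^2(\tau)=\tau$, contradicting aperiodicity of the adding machine — is a nice argument that does not appear in the paper.

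That said, your scheme has two genuine gaps beyond the ones you flag. First, regularity: the paper's proof (like the de Carvalho--Lyubich--Martens result it extends) excludes $C^1$ curves, whereas your mechanism needs strictly more smoothness. With only $C^1$ or $C^2$ data the Lagrange remainder gives $q_2-\tau_y=\tfrac12 f''(\xi_n)(\lambda^2/\mu)^n(q_1-\tau_x)^2$ with $\xi_n\to\tau_x$ but with no rate on $f''(\xi_n)\to f''(\tau_x)$; this is perfectly consistent with $f''(\tau_x)=0$ and $q_2\neq\tau_y$, so no contradiction arises. To force $q_2=\tau_y$ you need at least $C^{2+\alpha}$ with $|\lambda|^{\alpha}\lambda^2/\mu<1$ (roughly $\alpha\gtrsim 0.01$), or $C^3$. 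So even if completed, your argument proves a weaker theorem. Second, the contradiction hinges on $\lambda^2\neq\mu$, which the paper never establishes or needs: numerically $\lambda^2-\mu\approx 0.0008$, so the whole argument rides on the rigorous computer-assisted enclosures of $\lambda$ and $\mu$, and the divergence $(\lambda^2/\mu)^n\approx(1.01)^n$ is slow enough that the error terms you absorb into $o(1)$ — the deviation of $\psi_0^n$ from $\Lambda^n$ (for which \autoref{lem:uniform_convergence} gives convergence but no rate) and the Taylor remainders, both amplified by the division by $\mu^n$ — genuinely compete with it and must be estimated, not just asserted. The paper's cone-field argument sidesteps both issues: it uses no numerics beyond the sign of the twist at the tip and applies to $C^1$ curves.
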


These results extend those of \cite{Carvalho_Lyubich_Martens-2005} about 
nonexistence of smooth curves containing the invariant Cantor set 
for infinitely renormalizable dissipative maps to the area-preserving setting.

\section{Existence of Lipschitz curves}

In this section we prove \autoref{thm:lipschitzCurves}. The idea is to 
create a sequence of piecewise smooth curves with uniformly bounded Lipschitz 
constants that approach the invariant Cantor set. It then follows by the 
Arzelà-Ascoli theorem that there is a convergent subsequence. The limit of this 
subsequence is then our sought after Lipschitz curve. The sequence of curves is 
created inductively by choosing an initial curve $ \gamma_{0} $, projecting the 
previous curve using $ \psi_{0} $ and $ \psi_{1} $ and then connecting the 
pieces while at the same time controlling the Lipschitz constants.

Note that the proofs do not use the fact that the maps considered are exact 
symplectic twist maps. Rather the important part is that the renormalization 
microscope has a strong enough contraction to compensate for the 
reparametrization, allowing us to define the sequence of Lipschitz curves for 
the renormalization fixed point. The uniform convergence of the rescalings to 
the iterated function system of the renormalization fixed point then allows us 
to extend this result to all infinitely renormalizable maps. Thus a similar 
proof would also apply to other renormalization schemes or where a similar 
iterated function system appears, as long as we have appropriate bounds and 
convergence.

We begin by showing a bound on the Lipschitz constants on each of the projected 
pieces.

\begin{lemma}
\label{lem:contraction}
	Let $ F\in W^{s}_{loc}(F_{GJM}) $ and let $ \gamma\co [0,1]\to B_{F} $ be 
	a piecewise smooth curve with Lipschitz constant $ L $. Then for any $ n\ge 
	1 $ and any 
	$ w\in\{ 0,1 \}^{n} $ the curve $ \psi_{w}^{n}\circ \gamma $ has Lipschitz 
	constant at most $ \theta^{n}L $.
\end{lemma}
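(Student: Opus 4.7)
The plan is to prove the bound by a straightforward application of the chain rule together with the uniform contraction estimate provided by \autoref{lem:prelim_contraction}. Since
\[
\psi_{w}^{n} = \psi_{w_{1}}[F]\circ\psi_{w_{2}}[RF]\circ\dots\circ\psi_{w_{n}}[R^{n-1}F]
\]
is a composition of $n$ rescalings, the chain rule expresses $\mathrm{D}\psi_{w}^{n}(p)$ as a product of the differentials of the individual rescalings, evaluated at the appropriate orbit points $q_{i} = \psi_{w_{i+1}}[R^{i}F]\circ\dots\circ\psi_{w_{n}}[R^{n-1}F](p)$, which by the nesting property lie in the corresponding $B_{R^{i-1}F}$.

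The key step is to bound each factor $\lVert\mathrm{D}\psi_{w_{i}}[R^{i-1}F](q_{i})\rVert$ by $\theta$. Because $F\in W^{s}_{loc}(F_{GJM})$, all iterates $R^{i-1}F$ belong to a neighborhood of $F_{GJM}$ where \autoref{lem:prelim_contraction} applies. Although \autoref{lem:prelim_contraction} is stated for the EKW rescalings $\Lambda$ and $F\circ\Lambda$, the remark immediately following it in the excerpt — that $\psi_{0}$ differs from $\Lambda$ only by a translation depending on $F$, so that $\mathrm{D}\psi_{0}=\mathrm{D}\Lambda$ and $\mathrm{D}\psi_{1}=\mathrm{D}(F\circ\Lambda)$ — lets me carry the $\theta$-bound over to the GJM setting without change. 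By submultiplicativity of the operator norm along the orbit, $\lVert\mathrm{D}\psi_{w}^{n}(p)\rVert \le \theta^{n}$ uniformly on the domain.

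From here the Lipschitz bound follows readily. On each smooth piece of $\gamma$, the chain rule gives $(\psi_{w}^{n}\circ\gamma)'(t) = \mathrm{D}\psi_{w}^{n}(\gamma(t))\,\gamma'(t)$, so $\lvert(\psi_{w}^{n}\circ\gamma)'(t)\rvert\le\theta^{n}L$ at almost every $t$. Integrating this estimate and using continuity of $\psi_{w}^{n}\circ\gamma$ across the finitely many junction points of $\gamma$ yields $\lvert\psi_{w}^{n}\circ\gamma(t_{1}) - \psi_{w}^{n}\circ\gamma(t_{2})\rvert \le \theta^{n}L\,\lvert t_{1}-t_{2}\rvert$ for all $t_{1},t_{2}\in[0,1]$, which is the claimed bound. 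The only delicate point — and it is very mild — is the uniform-in-$n$ applicability of \autoref{lem:prelim_contraction} along the entire renormalization orbit of $F$; this is precisely what the local hypothesis $F\in W^{s}_{loc}(F_{GJM})$ is designed to ensure, since then every $R^{i-1}F$ lies inside the set $W(\rho)$ where the bound is valid.
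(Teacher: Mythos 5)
Your proof is correct and follows essentially the same route as the paper: bound $\lVert\mathrm{D}\psi_{w}^{n}\rVert$ by $\theta^{n}$ via the chain rule and \autoref{lem:prelim_contraction}, then integrate $\lvert(\psi_{w}^{n}\circ\gamma)'\rvert$ to get the Lipschitz estimate. You simply make explicit two points the paper leaves implicit, namely the factorization of $\mathrm{D}\psi_{w}^{n}$ along the renormalization orbit and the transfer of the $\theta$-bound from the EKW to the GJM rescalings.
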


\begin{proof}
	Let $ s_{1},s_{2}\in [0,1] $. Then using \autoref{lem:prelim_contraction} 
	we get
	\begin{align*}
		\absv{\psi_{w}^{n}(\gamma(s_{1}))-\psi_{w}^{n}(\gamma(s_{2}))} & \le \int_{s_{2}}^{s_{1}}\absv{(\psi_{w}^{n}(\gamma(t)))^{\prime}}dt \\
		& = 
		\int_{s_{2}}^{s_{1}}\absv{\mathrm{D}\psi_{w}^{n}(\gamma(t))\gamma^{\prime}(t)}dt
		 \\
		& \le \theta^{n}\int_{s_{2}}^{s_{1}}\absv{\gamma^{\prime}(t)}dt \\
		& \le \theta^{n}L\absv{s_{1}-s_{2}} \, .
	\end{align*}
\end{proof}

We are now ready to prove the existence of the Lipschitz curve for the 
renormalization fixed point $ F_{GJM} $. As explained earlier we will achieve 
the result for all infinitely renormalizable maps as a corollary using the 
uniform convergence of $ \psi_{i}^{n}(F)\to \psi_{i}(F_{GJM}) $ from 
\autoref{lem:uniform_convergence}.

\begin{theorem}
	\label{thm:fixed_point_lipschitz}
	The Eckmann-Koch-Wittwer renormalization fixed point $ F_{GJM} $ admits a 
	Lipschitz curve containing its invariant Cantor set $ \mathcal{O}_{F_{GJM}} 
	$.
\end{theorem}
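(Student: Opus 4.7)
The plan is to realize the Lipschitz curve as a uniform limit of an explicit sequence of piecewise smooth approximants $\gamma_n \colon [0,1] \to B_{F_{GJM}}$ with uniformly bounded Lipschitz constants, and then to invoke the Arzelà–Ascoli theorem. The approximants are built by a one-step recursion driven by the iterated function system $\{\psi_0,\psi_1\}$, with \autoref{lem:contraction} controlling how the Lipschitz constant transforms from level to level and a bounded connecting arc gluing the two branches into a single continuous curve.

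Concretely, I would start by choosing $\gamma_0$ to be any piecewise smooth curve inside $B_{F_{GJM}}$, with some Lipschitz constant $L_0$, beginning at a point of $B_0(F_{GJM})$ and ending at a point of $B_1(F_{GJM})$. I would then define $\gamma_{n+1}$ on the three sub-intervals $[0,1/3]$, $[1/3,2/3]$, $[2/3,1]$ by placing $\psi_0\circ\gamma_n$ reparametrized by $t\mapsto 3t$ on the first, $\psi_1\circ\gamma_n$ reparametrized by $t\mapsto 3t-2$ on the third, and on the middle a constant-speed piecewise smooth path inside $B_{F_{GJM}}$ joining $\psi_0(\gamma_n(1))$ to $\psi_1(\gamma_n(0))$; such a connector can be chosen of length bounded by some uniform constant $D$, since $B_{F_{GJM}}$ is compact and simply connected. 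By \autoref{lem:contraction} each $\psi_i$-piece has Lipschitz constant at most $3\theta L_n$, and the connector has Lipschitz constant at most $3D$. Writing $K\defeq 3D$ and using $3\theta<1$ (since $\theta=0.272$), the recursion
\[ L_{n+1} \le \max\{3\theta L_n,\, K\} \]
gives the uniform bound $L_n \le \max\{L_0,K\}$ for every $n$. A routine induction on $n$ also shows that the image of $\gamma_n$ meets every cylinder at level $n+1$ of the renormalization microscope, because applying $\psi_i$ shifts level-$(n+1)$ cylinders into level-$(n+2)$ cylinders with the symbol $i$ prepended.

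Extracting a uniformly convergent subsequence via Arzelà–Ascoli then yields a Lipschitz curve $\gamma\colon [0,1]\to B_{F_{GJM}}$, whose Lipschitz constant is inherited from the uniform bound. To verify $\mathcal{O}_{F_{GJM}} \subset \gamma([0,1])$, I would fix $x\in\mathcal{O}_{F_{GJM}}$ with its nested sequence $x\in B^n_{w(n)}$, choose $t_n\in[0,1]$ with $\gamma_n(t_n)\in B^n_{w(n)}$, pass to a convergent sub-subsequence $t_{n_k}\to t^*$, and use uniform convergence together with $\mathrm{diam}(B^n_{w(n)})\to 0$ (obtained by iterating \autoref{lem:prelim_contraction}) to conclude $\gamma(t^*)=x$. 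I expect the main subtlety to be the balance $3\theta<1$: the sub-interval allocated to each $\psi_i$-piece must be small enough to leave room for a non-degenerate connector but large enough that the reparametrization factor does not overwhelm the contraction provided by $\theta$. The concrete numerical value $\theta=0.272$ from \autoref{lem:prelim_contraction} makes the symmetric choice of sub-intervals of length $1/3$ work; if $\theta$ were at least $1/3$ one would have to resort to a variable-length or arc-length parametrization to absorb the expansion from reparametrization.
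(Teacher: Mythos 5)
Your proof is correct and follows essentially the same strategy as the paper's: build $\gamma_{n+1}$ from $\gamma_n$ by applying the rescalings $\psi_0,\psi_1$ to reparametrized copies of $\gamma_n$, glue with a bounded connector, control the Lipschitz constants via \autoref{lem:contraction}, and conclude with Arzel\`a--Ascoli plus the fact that the limit curve meets every cylinder $B^n_w$. The only cosmetic difference is that the paper allocates subintervals of length $\theta$ to the $\psi_i$-pieces (so the reparametrization exactly cancels the contraction and the recursion reads $L_{k+1}\le\max(L_k,L_1)$, needing only $\theta<1/2$), whereas your symmetric choice of length $1/3$ relies on $3\theta<1$ --- which, as you correctly note, holds for $\theta=0.272$.
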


\begin{proof}
	Let $ \gamma_{0}\co [0,1]\to B_{F_{GJM}} $ be a piecewise smooth curve. We 
	will then construct a new curve $ \gamma_{1} $ in the following way: For 
	each $ w\in\{0,1\} $ consider the curve $ \psi_{w}\circ\gamma_{0} $. 
	Connect $ \gamma_{0}(0) $ to $ \psi_{0}(\gamma_{0}(0)) $, 
	$ \psi_{0}(\gamma_{0}(1)) $ to $ \psi_{1}(\gamma_{0}(0)) $ and $ 
	\psi_{1}(\gamma_{0}(1)) $ to $ \gamma_{0}(1) $ by line segments. This gives 
	us a piecewise smooth curve $ \gamma\co [0,5]\to B_{F_{GJM}} $. On each 
	piece of $ \gamma $ corresponding to a piece $ \psi_{w}\circ\gamma_{0} $ we 
	affinely reparametrize the curve on an interval of size $ \theta $, where $ 
	\theta=0.272 $ is the constant from \autoref{lem:prelim_contraction}. The 
	remaining parts of the interval $ [0,1] $ we use to 
	affinely reparametrize the $ 3 $ connecting pieces on intervals of size $ 
	\frac{1-2\theta}{3} $. This gives us a piecewise smooth curve $ \gamma_{1} 
	$ with Lipschitz constant $ L_{1} $ given by the maximum of $ 
	\gamma_{1}^{\prime}(t) $ over each smooth piece.
	
	Continuing by induction, assume we have constructed a piecewise smooth 
	curve $ \gamma_{k}\co [0,1]\to B_{F_{GJM}} $ and construct a curve $ 
	\gamma_{k+1} $ 
	using the same construction as for $ \gamma_{1} $. We would now like to 
	estimate the Lipschitz constant of $ \gamma_{k+1} $. First let $ 
	w\in\{0,1\} $ and consider the piece of the curve $ \gamma_{k+1} $ 
	given by $ \psi_{w}\circ\gamma_{k}\circ\varphi_{w} $ where $ \varphi_{w} $ 
	is the corresponding affine reparametrization. Using 
	\autoref{lem:contraction} we get
	\begin{align*}
		\absv{\gamma_{k+1}(s_{1})-\gamma_{k+1}(s_{2})} & = 
		\absv{\psi_{w}(\gamma_{k}(\varphi_{w}(s_{1})))-\psi_{w}(
		\gamma_{k}(\varphi_{w}(s_{2})))} \\
		& \le \theta L_{k}\absv{\varphi_{w}(s_{1})-\varphi_{w}(s_{2})} \\
		& = \dfrac{\theta L_{k}}{\theta}\absv{s_{1}-s_{2}} \\
		& = L_{k}\absv{s_{1}-s_{2}} \, .
	\end{align*} Hence on each part $ \psi_{w}\circ\gamma_{k}\circ\varphi_{w} $ 
	the curve $ \gamma_{k+1} $ has Lipschitz constant at most $ L_{k} $. 
	Outside of all $ B_{w}^{1}(F_{GJM}) $ the curve $ \gamma_{k+1} $ looks 
	exactly like $ \gamma_{1} $ and hence the Lipschitz constant here is at 
	most $ L_{1} $. From this we have 
	$ L_{k+1}\le \max(L_{k},L_{1})\le \max(L_{k-1},L_{1})\le\dots\le L_{1} $.
	
	We then have a sequence of Lipschitz curves $ \gamma_{k} $ with Lipschitz 
	constants uniformly bounded by $ L_{1} $. By the Arzelà-Ascoli theorem 
	there is then a subsequence $ \gamma_{k_{l}} $ converging uniformly to a 
	Lipschitz curve $ \gamma $. The image of this curve $ \gamma $ must then 
	contain the invariant Cantor set $ \mathcal{O}_{F_{GJM}} $ of $ F_{GJM} $ 
	since it intersects $ B^{n}_{w} $ for every $ w\in\{0,1\}^{n} $ and 
	every $ n $.
\end{proof}

\begin{figure}[H]
	\centering
	\begin{tikzpicture}
		\node at (0,0) {\includegraphics{./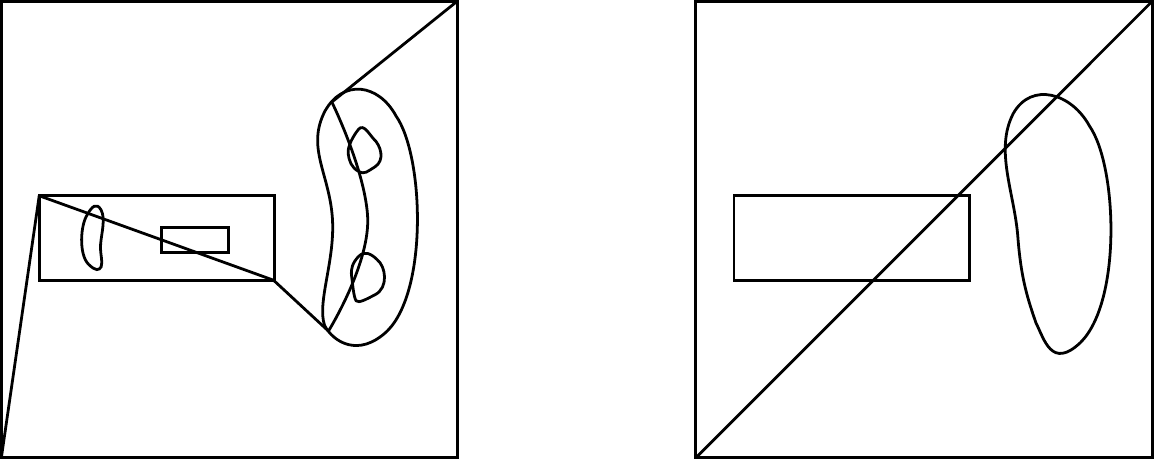}};
		\node at (-3.5,2.8) {$ B_{F_{GJM}} $};
		\node at (3.5,2.8) {$ B_{F_{GJM}} $};
		\node at (2.7,-1.2) {$ \gamma_{0} $};
		\node at (-3,-1) {$ \gamma_{1} $};
		
		\draw[->,>=stealth] (1.5,-1.5) to[out=200, in=300] 
			node[shift={(1,-0.3)}] {$ \psi_{0} $} (-4,-0.6);
		\draw[->,>=stealth] (1.5,1.5) to[out=160,in=40] 
			node[shift={(0,0.3)}] {$ \psi_{1} $} (-1.8,1.2);
	\end{tikzpicture}
	\caption{Constructing $ \gamma_{1} $ starting with $ \gamma_{0} $ diagonal.}
\end{figure}

\begin{corollary}
	Every map $ F\in W^{s}(F_{GJM}) $ admits a Lipschitz curve containing its 
	invariant Cantor set $ \mathcal{O}_{F} $.
\end{corollary}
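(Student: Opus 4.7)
The plan is to adapt the inductive construction from \autoref{thm:fixed_point_lipschitz} to a general $ F\in W^{s}(F_{GJM}) $ by replacing, at step $ k $, the fixed rescalings $ \psi_{i}[F_{GJM}] $ used there with the level-dependent rescalings $ \psi_{i}[R^{k-1}_{GJM} F] $ that actually appear in the iterated function system defining $ \mathcal{O}_{F} $. The crucial input is \autoref{lem:uniform_convergence}: it guarantees that these rescalings converge in $ C^{2} $ to $ \psi_{i}[F_{GJM}] $ and therefore inherit, at sufficiently high induction steps, essentially the same contraction that drove the fixed point argument.

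Concretely, I would fix some $ \theta^{\prime}\in(\theta,1/2) $ and, using \autoref{lem:uniform_convergence}, choose $ N $ such that $ \norm{\mathrm{D}\psi_{i}[R^{k-1}_{GJM} F]}\le\theta^{\prime} $ for all $ k\ge N $ and $ i\in\{0,1\} $. Starting from any piecewise smooth curve $ \gamma_{0}\co[0,1]\to B_{F} $, run the same inductive construction as in the theorem's proof but with $ \theta $ replaced by $ \theta^{\prime} $ in the affine reparametrizations and with $ \psi_{i}[R^{k-1}_{GJM} F] $ in place of $ \psi_{i} $ at step $ k $.

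For steps $ k\ge N $ the calculation in the theorem proof carries over: the bound $ \norm{\mathrm{D}\psi_{i}[R^{k-1}_{GJM} F]}\le\theta^{\prime} $ yields that each $ \psi $-piece of $ \gamma_{k+1} $ has Lipschitz constant at most $ L_{k} $, and the connecting segments, whose length and parametrization interval both contract by comparable factors under iteration of the construction, contribute a uniformly bounded Lipschitz constant. For the finitely many initial steps $ k<N $ no such uniform contraction is available, but each $ \psi_{i}[R^{k-1}_{GJM} F] $ is smooth on the compact set $ B_{R^{k-1}_{GJM} F} $ and by \autoref{lem:uniform_convergence} has derivative bounded uniformly in $ k $; consequently the Lipschitz constants can grow by at most a bounded factor per step, leaving $ L_{N} $ finite. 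Combining the two regimes yields a uniform bound on $ L_{k} $ for all $ k\ge 0 $, and Arzelà-Ascoli then extracts a uniformly convergent subsequence whose limit $ \gamma $ is Lipschitz; since each $ \gamma_{k} $ meets every $ B^{k}_{w} $ with $ w\in\{0,1\}^{k} $, so does $ \gamma $, forcing $ \mathcal{O}_{F}\subset\gamma([0,1]) $.

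The main obstacle is the first $ N $ steps, where $ \psi_{i}[R^{k-1}_{GJM} F] $ may fail to contract sharply enough for a reparametrization of size $ \theta^{\prime} $ to be non-expanding. This is resolved by noting that only finitely many such steps occur, so the resulting Lipschitz blow-up is absorbed into a finite constant $ L_{N} $; from step $ N $ onwards the genuine contraction supplied by \autoref{lem:uniform_convergence} takes over and prevents any further growth, which is exactly the role the authors foresaw for the uniform-convergence statement in deducing the corollary from the renormalization fixed point case.
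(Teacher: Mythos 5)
Your proposal is correct and follows essentially the same route as the paper: invoke \autoref{lem:uniform_convergence} to recover the contraction bound of \autoref{lem:prelim_contraction} (with a slightly larger $\theta$) for $\psi_{i}[R^{n}_{GJM}F]$ at all levels $n\ge N$, rerun the construction of \autoref{thm:fixed_point_lipschitz}, and conclude with Arzel\`a--Ascoli. Your explicit treatment of the finitely many initial levels $k<N$, where the Lipschitz constant may grow by a bounded factor, is a point the paper leaves implicit.
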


\begin{proof}
	By the uniform convergence of 
	$ \psi_{i}[R_{GJM}^{n-1}F]\to \psi_{i}[F_{GJM}] $ 
	given by \autoref{lem:uniform_convergence} we can find an $ N $, depending 
	only on $ F $, large enough so that for every $ n\ge N $ 
	\autoref{lem:prelim_contraction} is true for each $ 
	\psi_{i}\left[R_{GJM}^{n}F\right] $, possibly with a slightly larger 
	$ \theta $. Thus the construction from \autoref{thm:fixed_point_lipschitz} 
	over each $ B_{w}(R^{n}F) $ will not increase the Lipschitz constant for $ 
	n\ge N $. Similarly the curves connecting the pieces $ B_{w}(R^{n}F) $ will 
	also be $ C^{1} $-close to the connecting curves for the fixed point $ 
	F_{GJM} $ case so the Lipschitz constant will be uniformly bounded outside 
	$ B_{w}(R^{n}F) $ as well. Hence we again get a sequence of Lipschitz 
	curves $ \gamma_{k} $ with uniformly bounded Lipschitz constants $ L_{k} $. 
	By applying the Arzelà-Ascoli theorem we now get a Lipschitz curve 
	containing the invariant Cantor set $ \mathcal{O}_{F} $.
\end{proof}

Note that since the curve $ \gamma $ constructed above is Lipschitz it is in 
particular also rectifiable. In this sense we have proven the corresponding 
result of \cite{Carvalho_Lyubich_Martens-2005} for infinitely renormalizable 
area-preserving maps.

\section{Nonexistence of smooth curves}

We will now prove \autoref{thm:noSmoothCurves}. We will first consider only the 
renormalization fixed point $ F_{GJM} $. In order to show that there is no 
smooth curve containing $ \mathcal{O}_{F_{GJM}} $ we will first show that $ 
\mathcal{O}_{F_{GJM}} $ does not admit a continuous field of directions. 
Following the ideas of \cite{Carvalho_Lyubich_Martens-2005} this will then 
imply that $ \mathcal{O}_{F_{GJM}} $ does not admit a smooth curve containing 
it either. Using rigidity, the result for any $ F\in W^{s}(F_{GJM}) $ is then a 
simple corollary.

As opposed to the proof for Lipschitz curves this proof does use the fact that 
the fixed point is a twist map in an essential way.

We begin with a lemma about the EKW renormalization fixed point.

\begin{lemma}
	\label{lem:tipestimates}
	At the tip $ \tau = \bigcap_{n\ge 0}B^{n}_{0}(F_{EKW}) = (0,0) $ of the EKW
	renormalization fixed point $ F_{EKW} $ we have 
	\[ \dfrac{\partial X}{\partial x}(\tau)<0 \, . \]
\end{lemma}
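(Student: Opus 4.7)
The plan is to read off $\partial X/\partial x(\tau)$ from formula (2.2), use the normalizations on $s$ to collapse it to a single coefficient, and then establish the sign of that coefficient by differentiating the EKW functional equations.

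First, I would determine $X(\tau)$. Since $\tau = (0,0)$ lies on the $x$-axis, the implicit relation $y = -s(X, x)$ becomes $s(X(\tau), 0) = 0$, and the normalization $s(1, 0) = 0$ selects the branch $X(\tau) = 1$. Substituting $X = 1$, $x = 0$ into the $(1,1)$-entry of the matrix in (2.2) and using $s_1(1, 0) = 1$ then gives $\partial X/\partial x(\tau) = -s_2(1, 0)$, so the lemma is equivalent to the strict inequality $s_2(1, 0) > 0$.

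Next I would differentiate the EKW fixed-point functional equation $s(x, X) = \mu^{-1} s(z(x, X), \lambda X)$ in the second slot and evaluate at $(1, 0)$. Using $z(1, 0) = 1$ and $s_1(1, 0) = 1$, this collapses to $(\mu - \lambda)\, s_2(1, 0) = z_2(1, 0)$, and since $\mu > 0 > \lambda$ it suffices to show $z_2(1, 0) > 0$. By the symmetry $z(x, X) = z(X, x)$ this is the same as $z_1(0, 1) > 0$. Differentiating the $z$-equation $s(\lambda x, z) + s(\lambda X, z) = 0$ in $x$ at $(0, 1)$, and using the symmetry $s_1(x, X) = s_1(X, x)$ forced by area preservation (since $\det \mathrm{D}F = s_1(x, X)/s_1(X, x) \equiv 1$, whence $s_1(0, 1) = 1$), yields $z_1(0, 1)\,[s_2(0, 1) + s_2(\lambda, 1)] = -\lambda$. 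Since $-\lambda > 0$, the lemma reduces to $s_2(0, 1) + s_2(\lambda, 1) > 0$.

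The main obstacle is this final sign, which is not a formal consequence of the normalizations or the functional equations but rather a genuine analytic feature of the EKW fixed point, whose very existence is only known through the computer-assisted proof of \cite{Eckmann_Koch_Wittwer-1984}. The natural way to finish would be to invoke the rigorous enclosures for $s$ and its partial derivatives established there (or in the refined analysis of \cite{Gaidashev_Johnson-2016}), which supply explicit intervals containing $s_2(0, 1)$ and $s_2(\lambda, 1)$ with verifiably positive sum. A more structural alternative would try to exploit the reversibility $T F T = F^{-1}$ together with the period-doubling nesting $\Lambda(B_F) \subset B_F$ to pin down the orientation of $F$ in the $x$-direction at the tip, but I expect any such route would still have to defer to the computer-assisted numerical input to close the sign question.
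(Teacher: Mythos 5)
Your reduction coincides with the paper's up to the last step: $\partial X/\partial x(\tau)=-s_2(1,0)/s_1(1,0)$ via the normalizations, then $s_2(1,0)=z_2(1,0)/(\mu-\lambda)$ from differentiating the fixed-point equation for $s$, then differentiating the $z$-equation in one slot to reduce everything to the sign of $s_2(\lambda,1)+s_2(0,1)$. (Your aside that $s_1(0,1)=s_1(1,0)=1$ by area preservation is correct but not needed; only $s_1>0$, which follows from the twist condition, is used.)

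The genuine gap is your final claim that the sign of $s_2(\lambda,1)+s_2(0,1)$ ``is not a formal consequence of the normalizations or the functional equations'' and must be imported from computer-assisted enclosures. You give up one differentiation too early. Differentiating the $z$-equation $s(\lambda x,z(x,X))+s(\lambda X,z(x,X))=0$ with respect to the \emph{other} variable $x$ and evaluating at $(x,X)=(1,0)$, where $z(1,0)=1$, gives
\[ \lambda s_1(\lambda,1) + z_1(1,0)\bigl(s_2(\lambda,1)+s_2(0,1)\bigr)=0\,, \]
and the normalization $z_1(1,0)=\mu$ listed in Section 2 turns this into
\[ s_2(\lambda,1)+s_2(0,1) = -\frac{\lambda}{\mu}\,s_1(\lambda,1) > 0\,, \]
since $\lambda<0<\mu$ and $s_1>0$ everywhere by the negative twist condition. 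So the sign closes formally from the functional equations and normalizations alone; the only numerical input is the signs of $\lambda$ and $\mu$, which you already invoke elsewhere in your argument. No interval enclosures for $s_2$ are required.
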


\begin{proof}
	Note that the point $ (X,x)=(1,0) $ corresponds to the tip $ \tau $ of 
	$ F_{EKW} $ by the normalization condition $ s(1,0)=0 $.
	
	By the normalization condition $ s_{1}(1,0)=1 $ it follows that 
	\[ \dfrac{\partial X}{\partial y}(0,0) = -\dfrac{1}{s_{1}(1,0)}=-1 \] so 
	that the map $ F_{EKW} $ is in fact a negative twist map and we must have 
	that $ s_{1}(X,x)>0 $ everywhere. Since 
	\[ \dfrac{\partial X}{\partial x}(\tau) = 
	-\dfrac{s_{2}(1,0)}{s_{1}(1,0)} 
	\] we thus need to show that $ s_{2}(1,0) > 0 $.
	
	Consider first the equation for the renormalization fixed point for $ s $,
	\[ s(x,X) = \dfrac{1}{\mu}s(z(x,X),\lambda X) . \]
	Differentiating with respect to $ X $ gives us
	\[ s_{2}(x,X) = \dfrac{1}{\mu}(s_{1}(z(x,X),\lambda X)z_{2}(x,X)+\lambda 
	s_{2}(z(x,X),\lambda X)). \]
	Solving for $ s_{2} $, evaluating at the point $ (1,0) $ and using the 
	normalization condition $ z(1,0)=1 $ we get
	\[ s_{2}(1,0) = \dfrac{z_{2}(1,0)}{\mu-\lambda}\, , \]
	therefore showing that $ s_{2}(1,0) > 0 $ is reduced to showing that $ 
	z_{2}(1,0) > 0 $. 
	
	To do this we first differentiate Equation \ref{eq:z-equation} with respect 
	to $ y $ and get \[ s_{2}(\lambda x,z(x,X))z_{2}(x,X)+\lambda s_{1}(\lambda 
	X,z(x,X)) + s_{2}(\lambda X,z(x,X))z_{2}(x,X) = 0 \, . \]
	Evaluating at $ (x,X)=(1,0) $ yields 
	\begin{align*}
		 & s_{2}(\lambda,1)z_{2}(1,0) + \lambda s_{1}(0,1)+s_{2}(0,1)z_{2}(1,0) 
		 = 0 \\
		\imp & z_{2}(1,0) = -\lambda 
		\dfrac{s_{1}(0,1)}{s_{2}(\lambda,1)+s_{2}(0,1)} \, .
	\end{align*}
	Since $ -\lambda s_{1}(0,1)>0 $ due to the negative twist condition and $ 
	\lambda < 0 $ we must then show that $ s_{2}(\lambda,1)+s_{2}(0,1) $ is 
	also positive. To do this we differentiate Equation \ref{eq:z-equation} 
	with respect to $ x $ and get
	\[ \lambda s_{1}(\lambda x, z(x,X)) + s_{2}(\lambda 
	x,z(x,X))z_{1}(x,X)+s_{2}(\lambda X,z(x,X))z_{1}(x,X) = 0 . \]
	Evaluating at the point $ (x,X)=(1,0) $ then gives us
	\begin{align*}
		& \lambda s_{1}(\lambda,1)+\mu s_{2}(\lambda,1)+\mu 
		s_{2}(0,1) = 0 \\
		\imp & s_{2}(\lambda,1)+s_{2}(0,1) = 
		-\dfrac{\lambda}{\mu}s_{1}(\lambda,1) 
	\end{align*} which is positive by the negative twist condition forcing $ 
	s_{1} > 0 $ everywhere. Thus $ z_{2}(1,0) $ is positive and hence also $ 
	s_{2}(1,0) $ is positive, meaning that 
	\[ \dfrac{\partial X}{\partial x}(\tau) = 
	-\dfrac{s_{2}(1,0)}{s_{1}(1,0)} < 0 . \]
\end{proof}

We will now connect $ F_{EKW} $ with $ F_{GJM} $ by another lemma.

\begin{lemma}
	The maps $ F_{EKW} $ and $ F_{GJM} $ are conjugate by a translation in the 
	$ x $-direction.
\end{lemma}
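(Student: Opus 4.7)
The approach is to write down the translation explicitly and verify that conjugation intertwines the two renormalization schemes. The only difference between $ R_{EKW} $ and $ R_{GJM} $ is that $ \Lambda(x,y)=(\lambda x,\mu y) $ is replaced by $ \psi_{0}(x,y)=(\lambda x+p,\mu y) $, which differs from $ \Lambda $ precisely by a translation in the $ x $-direction. The natural candidate for the conjugating translation is therefore $ T(x,y)\defeq (x+a,y) $ where $ a=p/(1-\lambda) $ is the unique fixed point of the affine map $ x\mapsto\lambda x+p $. This is exactly the $ x $-coordinate of the tip $ \tau(F_{GJM})=(p/(1-\lambda),0) $ computed earlier, which is what one would expect geometrically: conjugation by $ T $ should carry the tip of $ F_{GJM} $ to the tip $ (0,0) $ of $ F_{EKW} $.

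The key algebraic observation is the intertwining identity
\[ T^{-1}\circ \psi_{0}\circ T = \Lambda \, , \]
which follows from a one-line calculation using $ \lambda a + p - a = 0 $. Setting $ \hat{F}\defeq T^{-1}\circ F_{GJM}\circ T $, one then checks directly that
\[ R_{EKW}(\hat{F}) = \Lambda^{-1}\circ \hat{F}^{2}\circ \Lambda = T^{-1}\circ \psi_{0}^{-1}\circ F_{GJM}^{2}\circ \psi_{0}\circ T = T^{-1}\circ R_{GJM}(F_{GJM})\circ T = \hat{F} \, , \]
where the last step uses that $ F_{GJM} $ is a fixed point of $ R_{GJM} $. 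Thus $ \hat{F} $ is a fixed point of $ R_{EKW} $.

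To finish I would invoke the uniqueness of the hyperbolic fixed point of $ R_{EKW} $ established in \cite{Eckmann_Koch_Wittwer-1984} to conclude $ \hat{F}=F_{EKW} $, which is exactly the claim $ F_{GJM}=T\circ F_{EKW}\circ T^{-1} $. This last step is the main obstacle: one must argue that $ \hat{F} $ lies in the neighborhood where uniqueness applies, or equivalently that the normalizations singling out $ F_{EKW} $ (namely $ s(1,0)=0 $, $ s_{1}(1,0)=1 $, $ z(1,0)=z(0,1)=1 $ and $ \mu=z_{1}(1,0) $) are inherited by the generating function of $ \hat{F} $. Since $ T $ is merely a translation sending the tip of $ F_{GJM} $ to the origin, the generating function of $ \hat{F} $ is obtained from that of $ F_{GJM} $ by the substitution $ (x,X)\mapsto(x-a,X-a) $, and the normalizations transport transparently under this substitution. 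The uniqueness result then forces $ \hat{F}=F_{EKW} $ and completes the proof.
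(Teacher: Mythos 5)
Your proof is correct and rests on the same key identity as the paper's: the translation to the tip intertwines the two rescalings, $T^{-1}\circ\psi_{0}\circ T=\Lambda$ (the paper writes this as $\psi_{0}=h^{-1}\circ\Lambda\circ h$ with $h=T^{-1}$). The difference is the direction in which you run the conjugation. The paper starts from $F_{EKW}$ and shows that $h^{-1}\circ F_{EKW}\circ h$ is a fixed point of $R_{GJM}$; since $F_{GJM}$ is introduced precisely as the translate of the EKW fixed point (cf.\ the remark in Section 2 that the $R_{GJM}$ fixed point ``is related to the fixed point of $R_{EKW}$ by a translation''), no uniqueness statement is needed --- the conjugate \emph{is} $F_{GJM}$ essentially by construction. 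You instead start from $F_{GJM}$, produce a fixed point $\hat F$ of $R_{EKW}$, and must then appeal to the local, computer-assisted uniqueness of $F_{EKW}$ to identify $\hat F$ with it; you correctly flag this as the delicate step, and your sketch of how the normalizations transport under the substitution $(x,X)\mapsto(x-a,X-a)$ is plausible but is genuinely extra work that the paper's direction avoids (and is somewhat circular, since how $F_{GJM}$ was normalized in the first place traces back to $F_{EKW}$). One further point that both you and the paper gloss over: the rescaling parameters $\lambda,\mu,p$ a priori depend on the map, so the computation tacitly uses that the rescalings attached to the conjugated map agree with those of the original; this is harmless here because the paper records $\lambda_{EKW}=\lambda_{GJM}$ and $\mu_{EKW}=\mu_{GJM}$. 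In short: same mechanism, opposite direction, with a small additional uniqueness burden on your side that you have identified but not fully discharged.
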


\begin{proof}
	Recall that the renormalization operator $ R_{EKW} $ is defined by \[ 
	R_{EKW}(F) = \Lambda^{-1}\circ F^{2} \circ \Lambda \] where 
	$ \Lambda(x,y) = (\lambda x,\mu y) $. We begin by noticing that $ \psi_{0} 
	= 
	h^{-1}\circ \Lambda\circ h $ where $ h $ is the translation in the $ x 
	$-direction by the tip $ \tau=\frac{p_{F}}{1-\lambda} $, i.e. $ 
	h(x,y) 
	= \left(x-\frac{p}{1-\lambda},y\right) $. We then get:
	\begin{align*}
		h^{-1}\circ F_{EKW} \circ h & = h^{-1}\circ 
		R_{EKW}(F_{EKW})\circ h \\
		& = h^{-1}\circ \Lambda^{-1}\circ F_{EKW}^{2}\circ\Lambda\circ h\\
		& = \left(h^{-1}\circ\Lambda^{-1}\circ h\right)\circ\left(h^{-1}\circ 
		F^{2}_{EKW}\circ h\right)\circ\left(h^{-1}\circ\Lambda\circ h\right)\\
		& = \psi^{-1}_{0}\circ\left(h^{-1}\circ F^{2}_{EKW}\circ 
		h\right)\circ\psi_{0} \\
		& = R_{GJM}\left(h^{-1}\circ F_{EKW}\circ h\right)
	\end{align*}
	so $ F_{GJM}=h^{-1}\circ F_{EKW}\circ h $ is a fixed point of $ R_{GJM} $.
\end{proof}

Since such a conjugacy does not change the values of derivatives, $ F_{GJM} $ 
must also satisfy \[ \dfrac{\d X}{\d x} < 0 \] at the tip $ \tau = 
\left(\frac{p}{1-\lambda},0\right) $ of $ F_{GJM} $. Note also that 
this coordinate change corresponds to the generator $ \sigma^{1}_{-1,0} $ from 
\cite{Gaidashev_Johnson-2016}, which in turn corresponds to the eigenvalue $ 
\lambda^{-1} $ of $ \mathcal{D}R $ at the fixed point. With this we can 
now prove that the invariant Cantor set $ \mathcal{O}_{F_{GJM}} $ of the 
renormalization fixed point $ F_{GJM} $ does not admit a continuous invariant 
direction field.

\begin{theorem}
	\label{thm:noDirectionField}
	There is no continuous invariant direction field on the invariant Cantor 
	set $ \mathcal{O}_{F_{GJM}} $ of $ F_{GJM} $.
\end{theorem}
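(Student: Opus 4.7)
The plan is to argue by contradiction: assume that a continuous $F$-invariant direction field $v$ exists on $\mathcal{O} := \mathcal{O}_{F_{GJM}}$. Using the translation conjugacy between $F_{GJM}$ and $F_{EKW}$ established in the preceding lemma, I can reduce to $F := F_{EKW}$, so that the tip is $\tau = (0,0)$ and $\psi_0$ is the linear map $\Lambda = \mathrm{diag}(\lambda,\mu)$. The first step is to identify $v(\tau)$. Iterating the renormalization identity $R_{EKW}(F)=F$ gives $F^{2^n} = \Lambda^{n}\circ F \circ \Lambda^{-n}$, so $F^{2^n}(\tau) = \Lambda^{n}(F(\tau))\to\tau$ and
\[
\mathrm{D}F^{2^n}(\tau) = \Lambda^{n}\,\mathrm{D}F(\tau)\,\Lambda^{-n}.
\]
Because $|\lambda/\mu|>1$, the projective action of this matrix on $\RP^{1}$ collapses every direction to the horizontal one as $n\to\infty$, and continuity of $v$ together with $F^{2^n}(\tau)\to\tau$ then forces $v(\tau)=(1,0)$.

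Next, I would determine $\mathrm{D}F(\tau)$ explicitly. Combining the previous lemma's conclusion $a:=\partial X/\partial x(\tau)<0$ with the negative-twist normalization $\partial X/\partial y(\tau) = -1$, with reversibility (which yields $\partial Y/\partial y(\tau) = a$), and with area preservation (which forces $\partial Y/\partial x(\tau) = c := 1-a^{2}$), one obtains
\[
\mathrm{D}F(\tau) = \begin{pmatrix} a & -1 \\ 1-a^{2} & a \end{pmatrix},\qquad a<0.
\]
Propagating via $F$-invariance, $v(F(\tau))\propto(a,c)$, and the anti-tip $\sigma := F^{-1}(\tau)$ (which is fixed by $\psi_1$ and satisfies $T(\sigma)=F(\tau)$ by reversibility) has $v(\sigma)\propto(a,-c)$. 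A short computation shows in addition that $\mathrm{D}\psi_1(\sigma)$ has eigenvalues $\lambda,\mu$ and is conjugate to $\Lambda$ via $\mathrm{D}F(\tau)$, so its weakly contracting eigenvector is precisely $(a,-c)$; i.e.\ the local renormalization at $\sigma$ is compatible with the value of $v(\sigma)$ dictated by $F$-invariance.

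To extract the contradiction I would exploit the sign $a<0$ through an orientation argument. Lift $v$ to a continuous unit vector field $\hat v$ on the totally disconnected set $\mathcal{O}$ normalized by $\hat v(\tau)=(1,0)$, and define the continuous cocycle $\beta\co\mathcal{O}\to\R$ by $\mathrm{D}F(p)\hat v(p) = \beta(p)\hat v(F(p))$. The first step shows $\prod_{k=0}^{2^n-1}\beta(F^k(\tau))\to a<0$, so the signed product over each dyadic block of length $2^n$ is eventually negative. The plan is then to combine this datum with the analogous one forced at $\sigma$ via $F^{2^n}(\sigma)=\psi_1^{n}(\tau)\to\sigma$, using the conjugacy $\mathrm{D}\psi_1(\sigma)=\mathrm{D}F(\tau)^{-1}\Lambda\,\mathrm{D}F(\tau)$ to pass between the two limits, and then to use the negative-twist ratchet (applied to $\hat v$ across the gap between $B^{1}_{0}$ and $B^{1}_{1}$) to show that the orientation of $\hat v$ required at $\sigma$ by continuity cannot simultaneously be consistent with the orientation propagated from $\tau$ by the dynamics. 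The main obstacle is making this orientation/ratchet comparison rigorous: unlike in the strongly dissipative setting of \cite{Carvalho_Lyubich_Martens-2005}, there is no dominated splitting here to lean on, and the contradiction must come from the more delicate interplay between the twist condition $\partial X/\partial y \neq 0$, the reversibility $T$, and the negative sign of $a$ at the tip.
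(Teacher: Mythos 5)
Your proposal assembles the right ingredients --- localizing at the tip, the identity $\mathrm{D}F^{2^{n}}(\tau)=\Lambda^{n}\,\mathrm{D}F(\tau)\,\Lambda^{-n}$, the projective collapse toward the horizontal coming from $|\lambda|>\mu$, and the sign $a=\partial X/\partial x(\tau)<0$ from the preceding lemma --- but it does not actually finish the proof: the last paragraph is a plan with a self-acknowledged ``main obstacle,'' and the detour through the anti-tip $\sigma$, the cocycle over dyadic blocks, and the ratchet across the gap between $B^{1}_{0}$ and $B^{1}_{1}$ is never carried out. The frustrating part is that you are one line away from the conclusion. Once $v(\tau)$ is horizontal, compute directly:
\[ \mathrm{D}F^{2^{n}}(\tau)(1,0)=\Lambda^{n}\mathrm{D}F(\tau)(\lambda^{-n},0)=\lambda^{-n}\Lambda^{n}(a,c)=\bigl(a,(\mu/\lambda)^{n}c\bigr)\longrightarrow (a,0), \]
so by invariance (which, for a direction field, means the image is a \emph{positive} multiple of the field at the image point) and continuity, $v(\tau)=\lim v\bigl(F^{2^{n}}(\tau)\bigr)=(a,0)/|a|=-(1,0)=-v(\tau)$, a contradiction. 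Equivalently, your own observation that $\prod_{k=0}^{2^{n}-1}\beta(F^{k}(\tau))\to a<0$ is already absurd, since $\beta>0$ for an invariant direction field; no comparison with $\sigma$ is needed. This is essentially the paper's argument in the case $\theta(\tau)$ horizontal. Two further points need care. First, in your opening step the projective action of $\Lambda^{n}$ does not converge on \emph{oriented} directions because $\lambda<0$: the image of a non-horizontal $v(\tau)$ alternates between neighbourhoods of $(1,0)$ and $(-1,0)$ as $n$ varies, which in fact yields the contradiction in that case immediately (the paper instead sidesteps the alternation by comparing $F^{2^{n}}(\tau)$ and $F^{-2^{n}}(\tau)$ via the ratchet cones, which land on opposite sides of the vertical for every $n$). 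You should make one of these two fixes explicit rather than asserting that the limit ``forces $v(\tau)=(1,0)$.''

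Second, your explicit matrix for $\mathrm{D}F(\tau)$ is not justified. Reversibility gives $\mathrm{D}F(\tau)^{-1}=T\,\mathrm{D}F(\sigma)\,T$ with $\sigma=F^{-1}(\tau)\neq\tau$, so it relates entries of $\mathrm{D}F$ at $\tau$ to entries at $\sigma$, not to each other; and from the generating-function formula one has $\partial X/\partial x(\tau)=-s_{2}(1,0)/s_{1}(1,0)$ while $\partial Y/\partial y(\tau)=-s_{2}(0,1)/s_{1}(1,0)$, and no normalization equates $s_{2}(1,0)$ with $s_{2}(0,1)$. Fortunately the explicit form is irrelevant: the argument only uses $a<0$ and the twist $\partial X/\partial y(\tau)=-1\neq 0$. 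So: delete the anti-tip machinery, fix the orientation bookkeeping for $\lambda<0$, and close the horizontal case with the displayed computation above; then your route coincides with the paper's.
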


The basic idea of the proof is to assume to the contrary that there is such a 
continuous invariant direction field and show that it must then contain 
directions on either side of the vertical line. Using $ \psi_{0} $ we can 
project these towards the tip of $ F_{GJM} $ at which point these directions 
become horizontal in opposite directions, contradicting continuity of the field 
of directions.

\begin{proof}
	Assume, towards a contradiction, that there is a continuous invariant line 
	field 
	$ \theta $ on $ \mathcal{O}_{F_{GJM}} $. Let $ \Theta^{\pm}_{v}(p) $ and $ 
	\Theta^{\pm}_{h}(p) $ be the vertical and 
	horizontal cones at $ p\in\mathcal{O}_{F_{GJM}} $ from the ratchet 
	phenomenon. We then know, since $ F_{GJM} $ is a negative twist map, that $ 
	\mathrm{D}F_{GJM}(\Theta^{+}_{v}(p))\subset 
	\Theta^{-}_{h}(F_{GJM}(p)) $ and $ 
	\mathrm{D}F_{GJM}(\Theta^{-}_{v}(p))\subset 
	\Theta^{+}_{h}(F_{GJM}(p)) $. By reversibility we also have that $ 
	\mathrm{D}F^{-1}_{GJM}(\Theta^{+}_{v}(p))\subset 
	\Theta^{+}_{h}(F^{-1}_{GJM}(p)) $ is 
	the reflection in the horizontal axis of $ 
	\mathrm{D}F_{GJM}(\Theta^{-}_{v}(T(p))) $ 
	and in the same way $ \mathrm{D}F_{GJM}^{-1}(\Theta^{-}_{v}(p))\subset 
	\Theta^{-}_{h}(F_{GJM}^{-1}(p)) $ is the reflection in the horizontal axis 
	of $ \mathrm{D}F_{GJM}(\Theta^{+}_{v}(T(p))) $.
	
	Using this fact, suppose first that $ \theta(\tau) $ is not horizontal. 
	Since $ F^{2^{n}}_{GJM}\vert_{B_{0}^{n}} = \psi_{0}^{n}\circ 
	F_{GJM}\circ \psi_{0}^{-n}\vert_{B_{0}^{n}} $ we can find $ N $ large 
	enough so that for every $ n\ge N $ we have $ 
	\mathrm{D}_{\tau}(\psi_{0}^{-n})(\theta(\tau))\in\Theta^{\pm}_{v}(\tau) $. 
	Applying 
	$ \mathrm{D}F_{GJM} 
	$ and $ \mathrm{D}F_{GJM}^{-1} $ the resulting directions must then be on 
	opposite 
	sides of the vertical. Finally, projecting back to $ B_{0}^{n} $ with $ 
	\psi_{0}^{n} $ these directions approach the horizontal on opposite sides 
	of the vertical axis. Since $ \theta $ is invariant under $ F_{GJM} $ it 
	must 
	also be invariant under $ F^{2^{n}}_{GJM} $. Thus $ 
	\theta\left(F_{GJM}^{2^{n}}(\tau)\right) $ and $ 
	\theta\left(F_{GJM}^{-2^{n}}(\tau)\right) $ are close to the horizontal 
	axis 
	on opposite sides of the vertical axis. Since furthermore we can choose $ n 
	$ large enough so that $ F_{GJM}^{2^{n}}(\tau) $ and $ 
	F_{GJM}^{-2^{n}}(\tau) $ 
	are as close to $ \tau $ as we want this contradicts continuity of $ \theta 
	$.
	
	Next suppose that $ \theta(\tau) $ is horizontal. By 
	\autoref{lem:tipestimates} we have that $ \theta(F_{GJM}(\tau)) $ must then 
	be on the opposite side of the vertical axis. Using the same argument as in 
	the previous case we can then find points arbitrarily close to $ \tau $ 
	where $ \theta $ is on the other side of the vertical axis, again 
	contradicting continuity.
	
	We conclude that there can be no continuous invariant direction field.
\end{proof}

Following \cite{Carvalho_Lyubich_Martens-2005} we now prove that this also 
implies that there is no continuous invariant line field on 
$ \mathcal{O}_{F_{GJM}} $.

\begin{lemma}
	\label{lem:noDirectionField}
	The invariant Cantor set $ \mathcal{O}_{F_{GJM}} $ does not admit a 
	continuous invariant line field.
\end{lemma}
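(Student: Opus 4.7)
The plan is to assume for contradiction that $\mathcal{O}_{F_{GJM}}$ carries a continuous invariant line field $\ell$ and promote it to a continuous invariant \emph{direction} field, thereby contradicting \autoref{thm:noDirectionField}. The key structural input is the fact, stated earlier, that $F_{GJM}$ on $\mathcal{O}_{F_{GJM}}$ is conjugate to the dyadic adding machine, so at every finite depth $n$ the dynamics descends to a single $2^{n}$-cycle on the clopen cylinders $B_{w}^{n}$, $w\in\{0,1\}^{n}$, and any obstruction to orienting $\ell$ consistently becomes a parity count that is automatically trivial one level deeper.

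First I would lift $\ell$ to a (not necessarily invariant) continuous direction field $\theta$. Since $\mathcal{O}_{F_{GJM}}$ is compact and totally disconnected, every point has a clopen neighborhood on which $\ell$ takes values in a short arc of $\RP^{1}$ that admits a lift to $S^{1}$; extracting a finite subcover and refining it to a disjoint clopen partition glues the local lifts into a global $\theta$. Invariance of $\ell$ then produces a function $\epsilon\co\mathcal{O}_{F_{GJM}}\to\{\pm 1\}$ with $\mathrm{D}F_{GJM}\theta(p)=\epsilon(p)\theta(F_{GJM}(p))$. Since $\epsilon$ is continuous into a discrete target and its domain is totally disconnected, $\epsilon$ is locally constant, so there is some $n$ for which $\epsilon$ is constant on each $B_{w}^{n}$.

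The core step is to solve the coboundary equation $\epsilon(p)=\eta(p)\eta(F_{GJM}(p))$ with $\eta\co\mathcal{O}_{F_{GJM}}\to\{\pm 1\}$ continuous; once such $\eta$ exists, a direct computation gives $\mathrm{D}F_{GJM}(\eta\theta)(p)=\eta(p)\epsilon(p)\eta(F_{GJM}(p))\cdot(\eta\theta)(F_{GJM}(p))=(\eta\theta)(F_{GJM}(p))$, so $\theta':=\eta\theta$ is the desired continuous invariant direction field. To solve this equation I pass to depth $m=n+1$: there $\epsilon$ descends to a function $\epsilon_{m}$ on $\{0,1\}^{m}$ and $F_{GJM}$ descends to a cyclic permutation $\sigma_{m}$ of order $2^{m}$ (addition by $1$ on $\mathbb{Z}/2^{m}$ under the adding-machine conjugacy). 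Iterating the coboundary equation around the $2^{m}$-cycle shows that a compatible $\eta_{m}$ exists if and only if $\prod_{w\in\{0,1\}^{m}}\epsilon_{m}(w)=1$. Since $\epsilon_{m}(w)$ depends only on the first $n$ entries of $w$, each value of $\epsilon$ at depth $n$ appears exactly $2^{m-n}=2$ times in this product, making it a perfect square in $\{\pm 1\}$ and hence equal to $1$. Lifting the resulting $\eta_{m}$ to a locally constant $\eta$ on $\mathcal{O}_{F_{GJM}}$ completes the construction.

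The delicate point is really only the parity count in the third step: the remaining work is either pure set-theoretic topology (lifting a line field on a Cantor set to a direction field via finite clopen refinements) or formal manipulation of the $\{\pm 1\}$-valued cocycle. Both rest entirely on $\mathcal{O}_{F_{GJM}}$ being totally disconnected and on the cyclic action at each depth supplied by the adding-machine conjugacy; once the even-exponent observation is made, the contradiction with \autoref{thm:noDirectionField} is immediate.
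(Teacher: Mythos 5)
Your proof is correct, and its first half coincides with the paper's own argument: both orient the line field on sufficiently deep clopen cylinders to obtain a continuous (not yet invariant) direction field $\theta$, encode the failure of invariance in a locally constant sign $\epsilon\co\mathcal{O}_{F_{GJM}}\to\{\pm 1\}$ constant on depth-$n$ cylinders, and exploit the fact that each depth-$n$ cylinder contains exactly two depth-$(n+1)$ cylinders, so that the product of $\epsilon$ over a full cycle of cylinders is a perfect square and hence equal to $1$. The two arguments part ways only at the finish. The paper concludes from this parity count that $F_{GJM}^{2^{n+1}}$ preserves orientation on $\mathcal{O}_{F_{GJM}}\cap B^{n+1}_{0}$ and then invokes the fixed-point identity $F_{GJM}^{2^{n+1}}\vert_{B^{n+1}_{0}}=\psi_{0}^{n+1}\circ F_{GJM}\circ\psi_{0}^{-(n+1)}$ to pull the direction field back through the renormalization microscope to an $F_{GJM}$-invariant direction field on all of $\mathcal{O}_{F_{GJM}}$. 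You instead solve the coboundary equation $\epsilon(p)=\eta(p)\eta(F_{GJM}(p))$ on the cyclic quotient $\mathbb{Z}/2^{n+1}$ supplied by the adding machine --- the triviality of the product around the cycle being precisely the solvability obstruction --- and twist $\theta$ by $\eta$ to obtain an invariant direction field directly, with no appeal to the renormalization structure. Your route is marginally more self-contained: it reduces \enquote{no continuous invariant line field} to \enquote{no continuous invariant direction field} for any homeomorphism of a Cantor set conjugate to the dyadic adding machine, whereas the paper's pullback uses that $F_{GJM}$ is the renormalization fixed point (which is harmless here, since \autoref{thm:noDirectionField} is only available for the fixed point anyway, and the microscope is already set up). Either way the contradiction with \autoref{thm:noDirectionField} is reached by the same essential mechanism.
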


\begin{proof}
	Suppose towards a contradiction that there is a continuous invariant line 
	field on $ \mathcal{O}_{F_{GJM}} $. By compactness we can find an $ n_{0} $ 
	large enough so that the part of 
	the line field in $ \mathcal{O}_{F_{GJM}}\cap B^{n_{0}}_{w} $ can be 
	continuously oriented for every $ w\in\{0,1\}^{n_{0}} $. Furthermore, since 
	the pieces are all disjoint, choosing a continuous orientation on each of 
	them gives us a continuous orientation of the line field on the entire set 
	$ \mathcal{O}_{F_{GJM}}	$. This gives us a continuous direction field 
	on $ \mathcal{O}_{F_{GJM}} $ and it allows us to partition $ 
	\mathcal{O}_{F_{GJM}} $ into two parts: $ \mathcal{O}_{F_{GJM}}^{+} $ on 
	which $ F_{GJM} $ preserves the orientation and $ \mathcal{O}_{F_{GJM}}^{-} 
	$ on which $ F_{GJM} $ reverses the orientation. By 
	continuity both of these sets must be both closed and open. Then for every 
	large enough $ n $ every $ \mathcal{O}_{F_{GJM}}\cap B^{n}_{w} $ is 
	entirely contained in either $ 
	\mathcal{O}_{F_{GJM}}^{+} $ or $ \mathcal{O}_{F_{GJM}}^{-} $. Hence $ 
	F^{2^{n}}_{GJM}\vert_{\mathcal{O}_{F_{GJM}}\cap B^{n}_{0}} $ either 
	preserves or reverses orientation and therefore $ 
	F^{2^{n+1}}_{GJM}\vert_{\mathcal{O}_{F_{GJM}}\cap B^{n+1}_{0}} $ must 
	preserve orientation. Using the fact that $ F_{GJM} $ is the 
	renormalization fixed point this direction field can then be pulled back to 
	a new continuous invariant direction field on $ \mathcal{O}_{F_{GJM}} $, 
	but this contradicts Theorem \ref{thm:noDirectionField}. Therefore 
	$ \mathcal{O}_{F_{GJM}} $ cannot admit a continuous invariant line field.
\end{proof}

Finally, using the same idea as in \cite{Carvalho_Lyubich_Martens-2005}, we can 
now prove that there is no smooth curve containing $ \mathcal{O}_{F_{GJM}} $.

\begin{theorem}
	\label{thm:noSmoothCurve}
	There is no smooth curve containing $ \mathcal{O}_{F_{GJM}} $.
\end{theorem}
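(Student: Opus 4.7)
The plan is to use \autoref{lem:noDirectionField} as the engine: I will show that a smooth curve $ \gamma\supset\mathcal{O}_{F_{GJM}} $ would force the existence of a continuous $ F_{GJM} $-invariant line field on the Cantor set, contradicting that lemma. So I suppose towards a contradiction that a $ C^{1} $ curve $ \gamma $ contains $ \mathcal{O}_{F_{GJM}} $, and define $ \ell(p) := T_{p}\gamma $ for each $ p\in\mathcal{O}_{F_{GJM}} $. Continuity of $ \ell $ on $ \mathcal{O}_{F_{GJM}} $ is immediate because $ \gamma $ is $ C^{1} $ and $ \mathcal{O}_{F_{GJM}}\subset\gamma $ inherits its topology.

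The heart of the argument is $ F_{GJM} $-invariance of $ \ell $, which rests on the observation that the tangent direction at a point of $ \mathcal{O}_{F_{GJM}} $ is intrinsic to the Cantor set and independent of the ambient smooth curve chosen to contain it. Given $ p\in\mathcal{O}_{F_{GJM}} $, since $ \mathcal{O}_{F_{GJM}} $ is perfect there is a sequence $ p_{n}\in\mathcal{O}_{F_{GJM}}\setminus\{p\} $ with $ p_{n}\to p $, and because $ p_{n},p\in\gamma $ with $ \gamma $ being $ C^{1} $, the unit chord vectors $ (p_{n}-p)/\absv{p_{n}-p} $ can accumulate only on the line $ \ell(p) $. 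Since $ F_{GJM}(\mathcal{O}_{F_{GJM}})=\mathcal{O}_{F_{GJM}} $, the same reasoning applied to the sequence $ F_{GJM}(p_{n})\to F_{GJM}(p) $ inside $ \mathcal{O}_{F_{GJM}}\subset\gamma $ shows that $ (F_{GJM}(p_{n})-F_{GJM}(p))/\absv{F_{GJM}(p_{n})-F_{GJM}(p)} $ accumulates only on $ \ell(F_{GJM}(p)) $. On the other hand, a first-order Taylor expansion gives
\[
F_{GJM}(p_{n}) - F_{GJM}(p) = \mathrm{D}F_{GJM}(p)(p_{n}-p) + o(\absv{p_{n}-p}) \, ,
\]
so these normalized chords must also converge to the line $ \mathrm{D}F_{GJM}(p)\cdot\ell(p) $. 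Matching the two descriptions gives $ \ell(F_{GJM}(p))=\mathrm{D}F_{GJM}(p)\cdot\ell(p) $, which is exactly the invariance we needed, so \autoref{lem:noDirectionField} supplies the contradiction.

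I do not expect a serious obstacle in this passage, since the argument amounts to the soft general fact that a perfect subset of a $ C^{1} $ curve has an intrinsic tangent line at each of its points, and that such an intrinsic tangent line is automatically preserved under any diffeomorphism of the ambient plane that maps the subset into itself. All the analytic weight of the theorem was already absorbed by \autoref{lem:tipestimates} and \autoref{thm:noDirectionField}, where the negative-twist behaviour at the tip combined with the renormalization self-similarity was used to exclude continuous invariant direction fields. The remaining deduction, from nonexistence of a line field to nonexistence of a smooth containing curve, is purely formal.
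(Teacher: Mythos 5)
Your proposal is correct and follows essentially the same route as the paper: take the tangent line field of the hypothetical smooth curve, observe it is continuous and is intrinsically determined as a limit of secant lines through pairs of Cantor set points (hence $F_{GJM}$-invariant), and contradict \autoref{lem:noDirectionField}. Your write-up merely makes explicit, via the Taylor expansion, the invariance step that the paper states in one line.
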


\begin{proof}
	Assume that there is a smooth curve $ \gamma $ containing $ 
	\mathcal{O}_{F_{GJM}} $. Then the line field $ \theta $ of tangent lines to 
	$ 
	\gamma $ gives us a continuous line field on $ \mathcal{O}_{F_{GJM}} $. 
	Furthermore it is invariant since it satisfies \[ \theta(p) = \lim_{q\to 
	p}l(p,q) \] where $ l(p,q) $ is the line passing through $ 
	p,q\in\mathcal{O}_{F_{GJM}} $. Using \autoref{lem:noDirectionField} this 
	contradicts \autoref{thm:noDirectionField}.
\end{proof}

Using \autoref{thm:rigidity} we get the following corollary of 
\autoref{thm:noSmoothCurve}.

\begin{corollary}
	There is no smooth curve containing $ \mathcal{O}_{F} $ for any $ F\in 
	W^{s}_{loc}(F_{GJM}) $.
\end{corollary}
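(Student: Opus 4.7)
The plan is to use the smooth rigidity supplied by \autoref{thm:rigidity} to bootstrap the already-proven fixed point case \autoref{thm:noSmoothCurve} up to arbitrary $F \in W^{s}_{loc}(F_{GJM})$. The key observation I would rely on is that the proof of \autoref{thm:noSmoothCurve} nowhere uses smoothness past $C^{1}$: it only extracts the continuous tangent line field $\theta(p) = \lim_{q \to p} l(p,q)$ on the Cantor set, which is automatically $F_{GJM}$-invariant since it is determined by the Cantor set alone. Consequently, producing even a $C^{1}$ curve containing $\mathcal{O}_{F_{GJM}}$ already yields a contradiction.

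Concretely, I would argue by contradiction. Suppose $\gamma$ is a smooth curve whose image contains $\mathcal{O}_{F}$. By \autoref{thm:rigidity}, pick any $\alpha \in (0,\alpha_{0})$ and a $C^{1+\alpha}$ diffeomorphism $h$, defined on a neighborhood of $\mathcal{O}_{F_{GJM}}$, with $h(\mathcal{O}_{F_{GJM}}) = \mathcal{O}_{F}$. Setting $\tilde{\gamma} := h^{-1} \circ \gamma$, the chain rule makes $\tilde{\gamma}$ a $C^{1+\alpha}$ curve, in particular $C^{1}$, whose image contains $\mathcal{O}_{F_{GJM}}$. Applying the argument of \autoref{thm:noSmoothCurve} verbatim to $\tilde{\gamma}$ then produces a continuous invariant line field on $\mathcal{O}_{F_{GJM}}$, contradicting \autoref{lem:noDirectionField}.

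The main point requiring care, and what I would flag as the only potential obstacle, is verifying that the proof of \autoref{thm:noSmoothCurve} truly only uses $C^{1}$ regularity rather than full smoothness. This is essentially bookkeeping, since the tangent-field construction is manifestly robust, but it is precisely what makes the pull-back step work: the composition $h^{-1} \circ \gamma$ is $C^{1+\alpha}$ because \autoref{thm:rigidity} supplies a $C^{1+\alpha}$ (not merely H\"older) conjugacy. Had the rigidity been only H\"older, one could not extract continuous tangents at points of $\mathcal{O}_{F_{GJM}}$ from $\tilde{\gamma}$, and the transfer would fail. Given that rigidity gives the required regularity, the corollary is an immediate consequence.
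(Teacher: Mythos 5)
Your proposal is correct and is precisely the argument the paper intends (the paper states the corollary follows from \autoref{thm:rigidity} and \autoref{thm:noSmoothCurve} without spelling it out): pull a hypothetical smooth curve through $\mathcal{O}_{F}$ back by the $C^{1+\alpha}$ conjugacy to get a $C^{1}$ curve through $\mathcal{O}_{F_{GJM}}$ and invoke the fixed-point case. Your observation that the argument of \autoref{thm:noSmoothCurve} only requires $C^{1}$ regularity, so that the loss from smooth to $C^{1+\alpha}$ under the conjugacy is harmless, is exactly the point that makes the transfer work.
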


\section*{Acknowledgements}

The author would like to thank Professor Marco Martens for his invaluable 
guidance and enlightening discussions during the authors stay at Stony Brook 
University as a visiting research scholar during the spring of 2016, during 
which time most of this work was done.

The authors visit to Stony Brook University was supported by STINT grant 
number 2012-2153.

\bibliographystyle{abbrv}
\bibliography{bibliography}

\end{document}